\newtheorem{conj}{Conjecture}
\newtheorem{thm}{Theorem}[section]
\newtheorem{lem}[thm]{Lemma}
\newtheorem{cor}[thm]{Corollary}
\newtheorem{prop}[thm]{Proposition}
\newtheorem{rem}[thm]{Remark}
\theoremstyle{definition}
\newcommand{\onabla}{\overline\nabla}
\newcommand{\p}{\phi}
\newcommand{\met}{\langle \cdot , \cdot \rangle}
\title{Biharmonic maps from a complete Riemannian manifold into a non-positively curved manifold}
\author{Shun Maeta}
\thanks{}
\keywords{biharmonic maps, biharmonic submanifolds, biharmonic hypersurfaces, Chen's conjecture, generalized Chen's conjecture}
\subjclass[2000]{primary 58E20, secondary 53C43}
\address{\footnotesize{Faculty of Tourism and Business Management Shumei University, Chiba 276-0004, Japan.}
 }
\email{shun.maeta@gmail.com~{\it or}~maeta@mailg.shumei-u.ac.jp}
\begin{document} 
\maketitle 
\markboth{Biharmonic maps into a non-positively curved manifold} 
{Shun Maeta} 

\begin{abstract} 
We consider biharmonic maps $\phi:(M,g)\rightarrow (N,h)$ from a complete Riemannian manifold into a Riemannian manifold with non-positive sectional curvature.
Assume that $\alpha$ satisfies $1<\alpha<\infty$.
If for such an $\alpha$,
$\int_M|\tau(\phi)|^{\alpha}dv_g<\infty$
and 
$\int_M|d\phi|^2dv_g<\infty,$
 where $\tau(\phi)$ is the tension field of $\phi$,
then we show that $\phi$ is harmonic.
For a biharmonic submanifold, we obtain that the above assumption $\int_M|d\phi|^2dv_g<\infty$ is not necessary.
These results give affirmative partial answers to the global version of generalized Chen's conjecture.
\end{abstract}

\qquad\\


\section{Introduction}\label{intro} 
The theory of harmonic maps has been applied into various fields in differential geometry.
 Harmonic maps between two Riemannian manifolds are
 critical points of the {\em energy} functional 
 $E(\p)=\frac{1}{2}\int_M|d\p|^2dv_g,$
  for smooth maps $\p:(M^m,g)\rightarrow (N^n,h)$ from an $m$-dimensional Riemannian manifold into an $n$-dimensional Riemannian manifold, 
 where $dv_g$ denotes the volume element of $g.$
The Euler-Lagrange equation of $E$ is $\tau(\p)={\rm Trace}\nabla d\p=0,$
where $\tau(\p)$ is called the {\em tension field} of $\p.$ 
A map $\p:(M,g)\rightarrow (N,h)$ is called a {\em harmonic map} if $\tau(\p)=0.$

In 1983, J. Eells and L. Lemaire \cite{jell1} proposed the problem to consider biharmonic maps which are critical points of the {\em bi-energy} functional 
$E_2 (\phi )=\frac{1}{2}\int_M |\tau (\phi)| ^2 dv_g,$
 on the space of smooth maps between two Riemannian manifolds.
Biharmonic maps are, by definition, a generalization of harmonic maps.
In 1986, G. Y. Jiang \cite{jg1} derived the first and the second variational formulas of the bi-energy and studied biharmonic maps. The Euler-Lagrange equation of $E_2$ is 
$$\tau_2(\phi)=-\Delta^{\phi} \tau (\phi ) -\sum^m_{i=1} R^N (\tau (\phi )  , d\phi (e_i))d\phi (e_i)=0,$$
  where $\Delta^{\phi}:=\displaystyle\sum^m_{i=1}\left(\onabla_{e_i}\onabla_{e_i}-\onabla_{\nabla_{e_i}e_i}\right)$, and $\onabla$ is the induced connection on $\p^{-1}TN$.
 A map $\p:(M,g)\rightarrow(N,h)$ is a {\em biharmonic map} if $\tau_2(\p)=0$.  

One of the most interesting problem in the biharmonic theory is Chen's conjecture. 
 In 1988, B. Y. Chen raised the following problem:
\vspace{5pt}

\begin{conj}
[\cite{byc1}]\label{Chen}
Any biharmonic submanifold in $\mathbb{E}^n$ is minimal. 
\end{conj}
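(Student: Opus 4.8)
The plan is to exploit the flatness of the ambient space to collapse the biharmonic system to a single elliptic equation for the mean curvature, and then to attempt an integral (Bochner-type) argument. Realize the submanifold as an isometric immersion $\phi:(M^m,g)\rightarrow\mathbb{E}^n$, so that the tension field is the (unnormalized) mean curvature vector, $\tau(\phi)=m\vec H$. Since $\mathbb{E}^n$ is flat, $R^N\equiv 0$ and the curvature term in $\tau_2(\phi)$ disappears, so the Euler--Lagrange equation reduces to
\begin{equation*}
\Delta^\phi\tau(\phi)=0.
\end{equation*}
Thus a biharmonic submanifold of $\mathbb{E}^n$ is exactly one whose mean-curvature field is harmonic for the rough Laplacian on $\phi^{-1}T\mathbb{E}^n$, and the entire content of the conjecture is to upgrade this to $\tau(\phi)=0$, i.e.\ minimality.

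First I would pair the equation with $\tau(\phi)$ itself and integrate. When $M$ is compact this closes immediately: integrating $\la\Delta^\phi\tau(\phi),\tau(\phi)\ra$ by parts gives $\int_M|\onabla\tau(\phi)|^2dv_g=0$, whence $\onabla\tau(\phi)=0$. Because the pullback of the flat connection on $\phi^{-1}T\mathbb{E}^n$ is componentwise differentiation, this forces $\tau(\phi)$ to be a constant vector of $\mathbb{E}^n$; the divergence theorem applied to $\tau(\phi)=\Delta\phi$ then gives $\tau(\phi)\cdot\vol(M)=0$, so $\tau(\phi)=0$ and $\phi$ is minimal. The real difficulty therefore lies entirely in the \emph{noncompact} case, where there is no boundary to absorb the integration by parts.

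For a general complete $M$ I would introduce a family of Lipschitz cutoffs $\eta$ equal to $1$ on a geodesic ball $B_r$, supported in $B_{2r}$, with $|\nabla\eta|\le C/r$, multiply the equation by $\eta^2\tau(\phi)$, and integrate by parts to obtain a Caccioppoli-type estimate bounding $\int_M\eta^2|\onabla\tau(\phi)|^2dv_g$ by a remainder supported on the annulus $B_{2r}\setminus B_r$ and controlled by $|\nabla\eta|\,|\tau(\phi)|\,|\onabla\tau(\phi)|$. If this annular remainder could be shown to vanish as $r\to\infty$, then $\onabla\tau(\phi)=0$ globally, $\tau(\phi)$ would again be a constant vector, and a volume or growth estimate would finish by forcing $\tau(\phi)=0$.

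The hard part---and the reason the conjecture remains open in full generality---is precisely the control of this annular remainder: on an arbitrary complete manifold there is no \emph{a priori} reason for $|\nabla\eta|\,|\tau(\phi)|\,|\onabla\tau(\phi)|$ to be integrable, let alone to decay, and this scheme alone cannot rule out a hypothetical nonminimal biharmonic submanifold whose mean curvature grows slowly at infinity. This is exactly the gap the present paper is designed to close: the integrability hypotheses $\int_M|\tau(\phi)|^{\alpha}dv_g<\infty$ and $\int_M|d\phi|^2dv_g<\infty$ supply the decay needed to push the cutoff to infinity rigorously, yielding the conjecture under these growth restrictions. A proof of the conjecture exactly as stated would require replacing such hypotheses by an intrinsic mechanism forcing the boundary term to vanish, which at present is not available.
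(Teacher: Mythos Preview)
The statement you were asked to prove is Chen's \emph{conjecture}, not a theorem: the paper does not prove it and explicitly presents it as open, listing only partial cases (curves, surfaces in $\mathbb{E}^3$, hypersurfaces in $\mathbb{E}^4$, and the compact case due to Jiang). There is therefore no ``paper's own proof'' to compare against.

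Your write-up is not a proof of the conjecture, and to your credit you say so plainly. The compact case you sketch is correct and is essentially Jiang's classical argument: $\Delta^\phi\tau(\phi)=0$ on a closed $M$ forces $\onabla\tau(\phi)=0$, hence $\tau(\phi)$ is a constant vector in $\mathbb{E}^n$, and integrating $\tau(\phi)=\Delta\phi$ over the closed manifold yields $\tau(\phi)=0$. Your diagnosis of the noncompact case is also accurate: the cutoff/Caccioppoli scheme produces an annular remainder controlled by $|\nabla\eta|\,|\tau(\phi)|\,|\onabla\tau(\phi)|$, and without an a priori integrability or growth bound on $|\tau(\phi)|$ there is no mechanism to make it vanish as $r\to\infty$. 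This is precisely the gap the paper addresses by \emph{assuming} $\int_M|\tau(\phi)|^\alpha dv_g<\infty$ (and, for general maps, finite energy), which is exactly what Lemma~\ref{key lem 1} and Theorem~\ref{Th map} exploit.

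So the honest summary is: your proposal does not prove the conjecture, the paper does not prove the conjecture, and your explanation of \emph{why} it is hard aligns with the paper's own motivation. If the intent was to produce a proof of the statement as written, that is not achieved---but no one has achieved it, and your final paragraph correctly acknowledges this.
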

\vspace{5pt}
Here,
if $\p:(M,g)\rightarrow (N,h)$ is a biharmonic isometric immersion, then $M$ is called a {\em biharmonic submanifold} in $N$.

There are many affirmative partial answers to Chen's conjecture.
Chen's conjecture is solved completely if $M$ is one of the following: \\
 (a) a curve (cf. \cite{Dimi}),\\
 (b) a surface in $\mathbb{E}^3$ (cf. \cite{byc1}), \\
 (c) a hypersurface in $\mathbb{E}^4$ (cf. \cite{Leuven},\ \cite{Hasanis-Vlachos}). \\
 However, we cannot apply the methods in \cite{byc1}, \cite{Leuven} and \cite{Hasanis-Vlachos} to a higher dimensional manifold.
 In this paper, we try to think about Chen's conjecture from a different point of view.
 Here we notice that since there is no assumption of completeness for submanifolds in Chen's conjecture, in a sense it is a problem in local differential geometry.  
 With these understandings, we reformulate Chen's conjecture into a problem in global differential geometry (cf. \cite{kasm1},\ \cite{N-U-1}):
 
 \vspace{5pt}

\begin{conj}
\label{Akutagawa Maeta}
 Any complete biharmonic submanifold in $\mathbb{E}^n$ is minimal.
 \end{conj}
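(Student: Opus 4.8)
The plan is to exploit the flatness of the target to collapse the biharmonic system into a single elliptic equation for the tension field, and then to combine a Bochner-type inequality with a cutoff integration. Since $\phi:(M,g)\rightarrow\mathbb{E}^{n}$ is an isometric immersion into flat space, $R^{N}\equiv0$, so $\tau_2(\phi)=0$ reduces to $\Delta^{\phi}\tau(\phi)=0$; that is, the tension field (equivalently $m$ times the mean curvature vector) is a harmonic section of $\phi^{-1}T\mathbb{E}^{n}$. Pairing this with $\tau(\phi)$ and using the Bochner formula for the rough Laplacian gives
\[
\tfrac12\,\Delta|\tau(\phi)|^{2}=\la \Delta^{\phi}\tau(\phi),\tau(\phi)\ra+|\onabla\tau(\phi)|^{2}=|\onabla\tau(\phi)|^{2}\ge0,
\]
so $|\tau(\phi)|^{2}$ is a non-negative subharmonic function on the complete manifold $M$. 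The entire problem is thereby recast as showing that this subharmonic function is constant with $\onabla\tau(\phi)\equiv0$.

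To force $\onabla\tau(\phi)\equiv0$ I would test $\Delta^{\phi}\tau(\phi)=0$ against $\eta^{2}\tau(\phi)$, where $\eta$ is a Lipschitz cutoff equal to $1$ on the geodesic ball $B_{R}$, supported in $B_{2R}$, with $|\nabla\eta|\le C/R$. Integrating by parts (legitimate because $\eta$ has compact support and $M$ is complete) and absorbing the cross term by Young's inequality yields the Caccioppoli-type estimate
\[
\int_{B_{R}}|\onabla\tau(\phi)|^{2}\,dv_g\le \frac{C}{R^{2}}\int_{B_{2R}}|\tau(\phi)|^{2}\,dv_g.
\]
If the right-hand side tends to $0$ as $R\to\infty$ — in particular whenever $\int_{B_{2R}}|\tau(\phi)|^{2}\,dv_g$ grows subquadratically in $R$ — then $\onabla\tau(\phi)\equiv0$ on all of $M$.

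The hard part, and the point at which the \emph{unconditional} conjecture is genuinely delicate, is to control the growth of $\int_{B_{2R}}|\tau(\phi)|^{2}\,dv_g$ using only completeness and biharmonicity, with no a priori integrability or volume hypothesis. My plan is to attack this by establishing an intrinsic a priori bound: one would seek a monotonicity formula for the biharmonic immersion (in the spirit of the stationarity of $E_2$) controlling $\int_{B_{2R}}|\tau(\phi)|^{2}\,dv_g$ by a subquadratic power of $R$, or alternatively a pointwise bound on $|\tau(\phi)|^{2}=m^{2}|H|^{2}$ together with a controlled volume growth $\vol(B_{R})=o(R^{2})$, which would let a Yau-type Liouville theorem for bounded subharmonic functions apply. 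I expect this to be the main obstacle: it is exactly here that the local methods available for curves, surfaces in $\mathbb{E}^{3}$, and hypersurfaces in $\mathbb{E}^{4}$ break down in higher dimension, and where any successful global argument must inject information beyond the bare Bochner inequality.

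Once $\onabla\tau(\phi)\equiv0$ is secured, the conclusion follows with no further hypotheses. Parallelism in the flat bundle $\phi^{-1}T\mathbb{E}^{n}$ forces $\tau(\phi)$ to be a fixed vector $\tau_{0}\in\mathbb{R}^{n}$, normal to $M$ at every point. Consider the height function $f:=\la\phi,\tau_{0}\ra$ on $M$. For every $X$ tangent to $M$ one has $Xf=\la d\phi(X),\tau_{0}\ra=\la X,\tau_{0}\ra=0$, since $\tau_{0}$ is normal, so $f$ is constant and $\Delta f=0$; on the other hand $\Delta f=\la\tau(\phi),\tau_{0}\ra=|\tau_{0}|^{2}$. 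Hence $|\tau_{0}|^{2}=0$, i.e. $\tau(\phi)\equiv0$ and $M$ is minimal, as claimed.
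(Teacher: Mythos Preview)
The statement you are attempting to prove is labeled in the paper as a \emph{conjecture}, not a theorem; the paper does not prove it and presents it as an open reformulation of Chen's conjecture. What the paper \emph{does} prove are conditional results (Lemma~\ref{key lem 1} and Theorem~\ref{Th sub mfd}) that require the extra hypothesis $\int_M|\tau(\phi)|^{\alpha}\,dv_g<\infty$ for some $\alpha>1$. Your Steps~1--3 (the Bochner identity and the cutoff/Caccioppoli estimate) and your Step~5 (deducing minimality from $\onabla\tau(\phi)\equiv0$ via the normality of $\mathbf{H}$) are essentially the paper's own arguments for those conditional results, specialized to $\alpha=2$ and $R^N\equiv0$.

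The genuine gap is exactly the one you flag in Step~4: without any integrability or growth assumption, you have no way to show that $R^{-2}\int_{B_{2R}}|\tau(\phi)|^{2}\,dv_g\to 0$. Your proposed remedies --- a monotonicity formula for $E_2$, a pointwise bound on $|\mathbf{H}|$, or subquadratic volume growth --- are wish-list items, not arguments; none of them is known to follow from completeness and biharmonicity alone, and the paper explicitly notes that the local methods for curves, surfaces in $\mathbb{E}^3$, and hypersurfaces in $\mathbb{E}^4$ do not extend. So your outline is a correct reduction of the conjecture to its known hard core, and it matches the paper's strategy for the conditional theorems, but it is not a proof of the unconditional statement --- nor does the paper claim one.
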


\vspace{5pt}

 On the other hand, Chen's conjecture was generalized as follows (cf. \cite{rcsmpp1}):
 "Any biharmonic submanifold in a Riemannian manifold with non-positive sectional curvature is  minimal."
 There are also many affirmative partial answers to this conjecture. \\
 (a) Any biharmonic submanifold in $\mathbb{H}^3(-1)$ is minimal (cf. \cite{rcsmco2}). \\
 (b) Any biharmonic hypersurface in $\mathbb{H}^4(-1)$ is minimal (cf. \cite{absmco2}).\\
 (c) Any compact biharmonic submanifold in a Riemannian manifold with non-positive sectional curvature is minimal (cf. \cite{jg1}).\\
 However, Y.-L. Ou and L. Tang gave a counterexample of this conjecture (cf. \cite{ylolt1}).
 Note that there are non-minimal, biharmonic submanifolds in a sphere (cf. \cite{jg1}).
 With these understandings, it is natural to consider the following problem.

 \vspace{5pt}

\begin{conj}\label{N-U}
 Any complete biharmonic submanifold in a Riemannian manifold with non-positive sectional curvature is minimal. 
\end{conj}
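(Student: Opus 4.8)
The plan is to convert biharmonicity together with the sign of the ambient curvature into a subharmonicity statement for $|\tau(\phi)|^2$, and then to upgrade subharmonicity to $\tau(\phi)\equiv 0$ by a Liouville-type argument that uses only the completeness of $(M,g)$. First I would record that for an isometric immersion $\tau(\phi)=mH$, so that minimality is precisely the condition $\tau(\phi)=0$. Then, starting from the biharmonic equation $\Delta^\phi\tau(\phi)=-\sum_i R^N(\tau(\phi),d\phi(e_i))d\phi(e_i)$ and the Weitzenb\"ock identity $\tfrac12\Delta|\tau(\phi)|^2=|\onabla\tau(\phi)|^2+\langle\Delta^\phi\tau(\phi),\tau(\phi)\rangle$, I would substitute to obtain
\[
\tfrac12\Delta|\tau(\phi)|^2=|\onabla\tau(\phi)|^2-\sum_{i=1}^m\langle R^N(\tau(\phi),d\phi(e_i))d\phi(e_i),\tau(\phi)\rangle .
\]
Since $N$ has non-positive sectional curvature, each term $\langle R^N(\tau(\phi),d\phi(e_i))d\phi(e_i),\tau(\phi)\rangle\le 0$, so the right-hand side is $\ge|\onabla\tau(\phi)|^2\ge 0$ and $|\tau(\phi)|^2$ is subharmonic on all of $M$.

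Second, I would try to conclude from this subharmonicity. In Jiang's compact case \cite{jg1} one integrates over $M$ to force $\onabla\tau(\phi)\equiv 0$ and the curvature term to vanish, after which a short argument yields $\tau(\phi)=0$; this is the template I would aim to reproduce without compactness. For complete $M$ the natural replacement is an $L^p$-Liouville theorem in the spirit of Yau and Karp: a non-negative subharmonic function lying in $L^p$ for some $p>1$ must be constant, whereupon $\onabla\tau(\phi)\equiv 0$ makes $|\tau(\phi)|$ constant and one finishes as in the compact case. The crux of proving the \emph{unconditional} conjecture is therefore to produce such integrability — for example $\int_M|\tau(\phi)|^2\,dv_g<\infty$, or finite total energy — purely from completeness and the biharmonic submanifold structure, rather than assuming it outright.

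Third, this final step is exactly where I expect the argument to stall, and it is the reason the statement remains a conjecture rather than a theorem. Subharmonicity of $|\tau(\phi)|^2$ together with completeness alone is genuinely insufficient: complete manifolds (for instance hyperbolic space, which can itself occur as the domain) carry bounded non-constant subharmonic functions, so no Liouville property can hold without a further hypothesis controlling either the volume growth of $(M,g)$ or the decay of $|\tau(\phi)|$ at infinity. One would hope to extract such control intrinsically — say by combining the Weitzenb\"ock inequality with a Kato-type estimate $|\onabla\tau(\phi)|\ge|\nabla|\tau(\phi)||$ and a cut-off test-function argument — but absent any a priori integrability there is no mechanism forcing $|\tau(\phi)|$ to decay, and the delicate balance at stake is exactly what the Ou--Tang counterexample \cite{ylolt1} exploits in the incomplete setting. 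Consequently the realistic yield of this plan is the conditional statement of the abstract: the hypothesis $\int_M|\tau(\phi)|^\alpha\,dv_g<\infty$ with $1<\alpha<\infty$ supplies the missing integrability for the Liouville step, and, in the general map (rather than submanifold) case, $\int_M|d\phi|^2\,dv_g<\infty$ controls the curvature term above.
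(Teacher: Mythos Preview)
You correctly recognize that the statement is an open conjecture and that the paper does not prove it; what the paper establishes are the conditional results you describe at the end, under the extra hypothesis $\int_M|\tau(\phi)|^{\alpha}\,dv_g<\infty$ (and, for general maps, finite energy or infinite volume). Your diagnosis of where an unconditional argument stalls---subharmonicity of $|\tau(\phi)|^2$ plus completeness alone does not force a Liouville property---is exactly right.

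On the conditional side, your suggested route (Bochner/Weitzenb\"ock $\Rightarrow$ $|\tau(\phi)|^2$ subharmonic $\Rightarrow$ Yau--Karp $L^p$-Liouville $\Rightarrow$ $|\tau(\phi)|$ constant) differs slightly from what the paper actually does. The paper never invokes an off-the-shelf $L^p$-Liouville theorem; instead it multiplies the biharmonic equation directly by $\lambda^2|\tau(\phi)|^{\alpha-2}\tau(\phi)$ for a standard cut-off $\lambda$, integrates by parts, and uses Young's inequality to absorb the cross term. This yields $\int_M|\tau(\phi)|^{\alpha-2}|\onabla\tau(\phi)|^2\,dv_g=0$ and hence $\onabla\tau(\phi)\equiv 0$ in one step, for every $\alpha>1$. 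Your approach would reach the same conclusion but is somewhat less self-contained, and one has to be careful that the available $L^p$-Liouville theorems cover the full range $1<\alpha<\infty$ for $|\tau(\phi)|^2\in L^{\alpha/2}$ (the range $\alpha<2$ is delicate). The paper's bare-hands argument sidesteps this.

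One inaccuracy: you write that finite energy ``controls the curvature term.'' It does not---the curvature term already has the favorable sign and is simply discarded. In the paper, finite energy enters only in the final step for general maps: once $\onabla\tau(\phi)=0$ and $|\tau(\phi)|$ is constant, one still has to rule out a nonzero constant. The paper does this via Gaffney's theorem applied to the $1$-form $\omega(X)=|\tau(\phi)|^{\alpha/2-1}\langle d\phi(X),\tau(\phi)\rangle$, and the integrability $\int_M|\omega|\,dv_g<\infty$ is where $\int_M|d\phi|^2\,dv_g<\infty$ is used. For isometric immersions this step is unnecessary because $\langle d\phi(X),{\bf H}\rangle=0$, and differentiating that identity gives $m|{\bf H}|^2=0$ directly.
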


\vspace{5pt}

  K. Akutagawa and the author gave an affirmative partial answer to
 Conjecture~$\ref{Chen}$ (Conjecture~$\ref{Akutagawa Maeta}$ particularly) as follows (cf.~\cite{kasm1},~\cite{sm7}):
 
\vspace{5pt}

\begin{thm}
[\cite{kasm1}]
Any biharmonic proper submanifold in $\mathbb{E}^n$ is minimal.
\end{thm}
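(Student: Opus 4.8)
The plan is to exploit the flatness of the target. Since $\mathbb{E}^n$ is flat, the biharmonic system degenerates to a linear elliptic condition on the mean curvature vector, from which a subharmonicity is immediate; properness is then fed in through a Caccioppoli-type vanishing argument whose exhaustion function is the \emph{extrinsic} distance. Write the isometric immersion as $\phi:(M^m,g)\to\mathbb{E}^n$, so $\tau(\phi)=m\mathbf{H}$, where $\mathbf{H}$ is the mean curvature vector, a section of the normal bundle. Because $R^N\equiv0$, the equation $\tau_2(\phi)=0$ collapses to $\Delta^\phi\tau(\phi)=0$, that is $\Delta^\phi\mathbf{H}=0$, where $\Delta^\phi$ is the rough Laplacian on $\phi^{-1}T\mathbb{E}^n$. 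Pairing this with $\mathbf{H}$ yields the Bochner identity $\Delta|\mathbf{H}|^2=2|\onabla\mathbf{H}|^2\ge0$ (and, via Kato's inequality, $|\mathbf{H}|$ is subharmonic as well). Hence $|\mathbf{H}|^2$ is a non-negative subharmonic function on the complete manifold $(M,g)$, and ``$\phi$ is minimal'' means precisely ``$\mathbf{H}\equiv 0$''.

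Next I would record two extrinsic identities, obtained by identifying $\phi$ with the position vector of $\mathbb{E}^n$. Using $\onabla_X\phi=d\phi(X)$, $\langle \mathbf{H},d\phi\rangle=0$, $|d\phi|^2=m$ and the one-line fact $\langle\onabla\mathbf{H},d\phi\rangle=-m|\mathbf{H}|^2$ (Gauss--Weingarten), one gets
\[
\Delta\langle \mathbf{H},\phi\rangle=-m|\mathbf{H}|^2,\qquad
\langle\nabla\langle \mathbf{H},\phi\rangle,X\rangle=\langle\onabla_X\mathbf{H},\phi\rangle\ \text{ for all }X,
\]
so in particular $|\nabla\langle \mathbf{H},\phi\rangle|\le|\onabla\mathbf{H}|\,|\phi|$. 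The crucial place where properness is used is the observation that $\rho:=|\phi|:M\to[0,\infty)$ is $1$-Lipschitz (as $\phi$ is an isometric immersion, $|\nabla\rho|=|\phi^{\top}|/|\phi|\le1$) and, by hypothesis, a proper function; hence $\rho$ is an exhaustion of $M$ by relatively compact sublevel sets $\Omega_R:=\{\rho<R\}$.

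Then I would run the vanishing argument. For $R>0$ take a cut-off $\eta=\eta(\rho)$ with $\eta\equiv1$ on $\Omega_R$, $\eta\equiv0$ off $\Omega_{2R}$, $0\le\eta\le1$ and $|\eta'|\le C/R$, so $|\nabla\eta|\le C/R$ on $\Omega_{2R}\setminus\Omega_R$ and $\eta$ has compact support. Testing $\Delta|\mathbf{H}|^2=2|\onabla\mathbf{H}|^2$ against $\eta^2$ gives a Caccioppoli bound $\int_M\eta^2|\onabla\mathbf{H}|^2\,dv_g\le\frac{C'}{R^2}\int_{\Omega_{2R}}|\mathbf{H}|^2\,dv_g$, while testing $\Delta\langle \mathbf{H},\phi\rangle=-m|\mathbf{H}|^2$ against $\eta^2$ and using $|\nabla\langle \mathbf{H},\phi\rangle|\le 2R|\onabla\mathbf{H}|$ on $\operatorname{supp}\nabla\eta$ gives $m\int_M\eta^2|\mathbf{H}|^2\,dv_g\le4R\int_M\eta|\nabla\eta|\,|\onabla\mathbf{H}|\,dv_g$. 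Combining the two displays, via Cauchy--Schwarz and the co-area formula for $\rho$ — which, because $|\nabla\rho|\le1$, converts the annular integrals into the radial derivative of $F(R):=\int_{\Omega_R}|\mathbf{H}|^2\,dv_g$ — I expect a closed differential inequality for $F$ that prevents it from growing, forcing $\onabla\mathbf{H}\equiv0$ as $R\to\infty$; (alternatively one first extracts $\int_M|\mathbf{H}|^2\,dv_g<\infty$ and invokes a Yau-type Liouville theorem for non-negative $L^1$ subharmonic functions). Once $\onabla\mathbf{H}\equiv0$, the second identity forces $\nabla\langle \mathbf{H},\phi\rangle\equiv0$, hence $0=\Delta\langle \mathbf{H},\phi\rangle=-m|\mathbf{H}|^2$, so $\mathbf{H}\equiv0$ and $\phi$ is minimal.

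The genuine obstacle is the estimate in the third step: a properly immersed submanifold may have arbitrarily fast volume growth, so a crude use of Cauchy--Schwarz that brings in $\vol(\Omega_{2R})$ is fatal. The scheme can still close only because $\rho=|\phi|$ is simultaneously \emph{proper} and $1$-Lipschitz, so the co-area formula trades the uncontrolled volume factor for $F'(R)$ itself; carrying out this bookkeeping (and handling the a.e.\ regularity of the level sets $\{\rho=t\}$, or sidestepping it by testing only against functions of $\rho$) is the heart of the argument. Everything else — the reduction of Step 1, the Bochner identity, and the extrinsic computations — is routine.
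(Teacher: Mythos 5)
First, note that the paper you are reading does not prove this statement at all: it is quoted from \cite{kasm1} (Akutagawa--Maeta), so there is no in-text proof to compare against. Your preliminary reductions are correct and match the standard setup: in $\mathbb{E}^n$ the biharmonic equation collapses to $\Delta^{\p}\mathbf{H}=0$, whence $\Delta|\mathbf{H}|^2=2|\onabla\mathbf{H}|^2$, and the extrinsic identities $\Delta\langle\mathbf{H},\p\rangle=-m|\mathbf{H}|^2$ and $X\langle\mathbf{H},\p\rangle=\langle\onabla_X\mathbf{H},\p\rangle$ are right, as is the observation that $\rho=|\p|$ is a $1$-Lipschitz proper exhaustion.

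The gap is exactly at the point you flag as ``the heart of the argument,'' and the co-area formula does not repair it. Your inequality (B) gives $m\int\eta^2|\mathbf{H}|^2\le 4R\int\eta|\nabla\eta|\,|\onabla\mathbf{H}|$, and to exploit (A) you must apply Cauchy--Schwarz, producing the factor $\bigl(\int_{\Omega_{2R}\setminus\Omega_R}|\nabla\eta|^2\bigr)^{1/2}\le (C/R)\,\vol(\Omega_{2R}\setminus\Omega_R)^{1/2}$. This term carries no factor of $|\mathbf{H}|$ or $|\onabla\mathbf{H}|$, so it cannot be ``converted into the radial derivative of $F(R)=\int_{\Omega_R}|\mathbf{H}|^2$'': the co-area formula with $|\nabla\rho|\le 1$ only gives a \emph{lower} bound for $\vol(\Omega_{2R}\setminus\Omega_R)$ in terms of the level sets, never an upper bound, and a properly immersed submanifold can have arbitrarily fast extrinsic volume growth. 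Consequently neither $\int_M|\mathbf{H}|^2<\infty$ nor $\onabla\mathbf{H}\equiv 0$ can be extracted, and the ``alternative'' Yau-type $L^1$ Liouville route begs the same question. The missing idea is the pointwise nonlinearity hidden in your own Bochner identity: since $\onabla_X\mathbf{H}=-A_{\mathbf{H}}X+\nabla^{\perp}_X\mathbf{H}$, one has $|\onabla\mathbf{H}|^2=|A_{\mathbf{H}}|^2+|\nabla^{\perp}\mathbf{H}|^2\ge m|\mathbf{H}|^4$, so $u=|\mathbf{H}|^2$ satisfies the Riccati-type inequality $\Delta u\ge 2m\,u^2$. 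The proof in \cite{kasm1} runs a Cheng--Yau style \emph{maximum principle} on the compact sets $\p^{-1}(\overline{B_R})$ (compact precisely by properness), applied to $(R^2-|\p|^2)^2u$ and using $\Delta|\p|^2=2m(1+\langle\mathbf{H},\p\rangle)$; being pointwise, it never sees the volume of the extrinsic balls. Without either that nonlinearity or an integrability/volume-growth hypothesis (which is what the theorems of the present paper assume instead), your integral scheme cannot close.
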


\vspace{5pt}

Here, an immersed submanifold $M$ in a Riemannian manifold $N$ is said to be {\em proper} if the immersion is a proper map.
  Note that the properness of the immersion implies the completeness of $(M, g)$.  
The author also gave an affirmative partial answer to Conjecture $\ref{N-U}$ (cf. \cite{sm9}).
N. Nakauchi and H. Urakawa also gave affirmative partial answers to Conjecture $\ref{N-U}$ (cf. \cite{N-U-1}, \cite{N-U-2}).

For biharmonic maps, N. Nakauchi, H. Urakawa and S. Gudmundsson showed the following important result (cf. \cite{nnhusg1}).

\vspace{5pt}

\begin{thm}[\cite{nnhusg1}]\label{Th of N-U-G}
Biharmonic maps from a complete Riemannian manifold into a non-positively curved manifold with finite bi-energy and energy is harmonic.
\end{thm}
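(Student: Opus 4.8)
The plan is to derive a Bochner-type estimate from the biharmonic equation, combine it with the non-positivity of the curvature of $N$ and the finiteness of the bi-energy to force $\onabla\tau(\phi)\equiv 0$, and then bring in the finiteness of the energy to conclude that the resulting constant $|\tau(\phi)|$ vanishes.

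Since $(M,g)$ is complete, I would fix $o\in M$ and, for each $R>0$, choose a cut-off $\eta=\eta_R\in C^\infty_0(M)$ with $0\le\eta\le 1$, $\eta\equiv 1$ on the geodesic ball $B_R(o)$, $\operatorname{supp}\eta\subset B_{2R}(o)$ and $|d\eta|\le C/R$. Pairing the biharmonic equation $\Delta^{\phi}\tau(\phi)=-\sum_{i}R^N(\tau(\phi),d\phi(e_i))d\phi(e_i)$ with $\eta^2\tau(\phi)$ and integrating over $M$, the curvature term on the right-hand side is $\ge 0$ because $\sum_i\langle R^N(\tau(\phi),d\phi(e_i))d\phi(e_i),\tau(\phi)\rangle\le 0$ when $N$ has non-positive sectional curvature, while integrating the left-hand side by parts (legitimate since $\eta$ has compact support) gives $-\int_M\eta^2|\onabla\tau(\phi)|^2\,dv_g-2\int_M\eta\,\langle\onabla\tau(\phi),d\eta\otimes\tau(\phi)\rangle\,dv_g$. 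Hence $\int_M\eta^2|\onabla\tau(\phi)|^2\,dv_g\le -2\int_M\eta\,\langle\onabla\tau(\phi),d\eta\otimes\tau(\phi)\rangle\,dv_g$, and Cauchy--Schwarz together with Young's inequality (absorbing half of $\int_M\eta^2|\onabla\tau(\phi)|^2$ on the left) yield $\int_M\eta^2|\onabla\tau(\phi)|^2\,dv_g\le (C'/R^2)\int_M|\tau(\phi)|^2\,dv_g$ for a universal constant $C'$. Letting $R\to\infty$ and using $\int_M|\tau(\phi)|^2\,dv_g<\infty$ forces $\onabla\tau(\phi)\equiv 0$ on $M$, so that $|\tau(\phi)|$ is a constant $c\ge 0$.

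It remains to prove $c=0$. If $\vol(M)=\infty$ this is immediate, since $c^2\vol(M)=\int_M|\tau(\phi)|^2\,dv_g<\infty$. If $\vol(M)<\infty$, I would introduce the vector field $X$ on $M$ determined by $g(X,Y)=\langle d\phi(Y),\tau(\phi)\rangle$ for all $Y$; a computation in a local orthonormal frame, using $\sum_i(\onabla_{e_i}d\phi(e_i)-d\phi(\nabla_{e_i}e_i))=\tau(\phi)$ and $\onabla\tau(\phi)=0$, shows $\operatorname{div}X=|\tau(\phi)|^2=c^2$. Since $|X|\le |d\phi|\,|\tau(\phi)|=c\,|d\phi|$, Cauchy--Schwarz and finiteness of the energy give $\int_M|X|\,dv_g\le c\,\vol(M)^{1/2}\bigl(\int_M|d\phi|^2\,dv_g\bigr)^{1/2}<\infty$, while $\int_M|\operatorname{div}X|\,dv_g=c^2\vol(M)<\infty$; Gaffney's theorem (Stokes' theorem on a complete Riemannian manifold) then forces $0=\int_M\operatorname{div}X\,dv_g=c^2\vol(M)$, so $c=0$. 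Either way $\tau(\phi)\equiv 0$, i.e. $\phi$ is harmonic.

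The main obstacle is this last step, not the Bochner estimate: the curvature hypothesis and the finite bi-energy yield only that $|\tau(\phi)|$ is constant, and this constant is invisible to the bi-energy when $\vol(M)<\infty$, which is precisely why the finite-energy hypothesis is essential. The delicate point there is justifying $\int_M\operatorname{div}X\,dv_g=0$ on a non-compact manifold, for which the $L^1$-bounds on $X$ and $\operatorname{div}X$ supplied by finite energy and finite volume are exactly what is needed. A secondary technical care is that all the integrations by parts of the first step must be performed on balls, with the limit $R\to\infty$ taken only at the end, so as to stay within compactly supported data on the complete non-compact $M$.
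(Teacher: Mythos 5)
Your proposal is correct and follows essentially the same route as the paper's proof of its generalization (Theorem \ref{Th map} with $\alpha=2$): a cut-off function argument combined with the curvature sign and Young's inequality to force $\onabla\tau(\phi)\equiv 0$, followed by an application of Gaffney's theorem to the vector field dual to $\omega(Y)=\langle d\phi(Y),\tau(\phi)\rangle$ to kill the constant $|\tau(\phi)|$. The only cosmetic difference is your case split on $\vol(M,g)$; the paper's Cauchy--Schwarz bound $\int_M|\omega|\,dv_g\le\bigl(\int_M|d\phi|^2dv_g\bigr)^{1/2}\bigl(\int_M|\tau(\phi)|^2dv_g\bigr)^{1/2}$ makes the finite-volume hypothesis in your $L^1$ estimate for $X$ unnecessary, but your argument is valid in both cases.
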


\vspace{5pt}

One of the main result of this paper is the following generalization of Theorem $\ref{Th of N-U-G}$ as follows (cf. Theorem $\ref{Th map}$).

\vspace{5pt}

\begin{thm}[cf. Theorem $\ref{Th map}$]
Let $\p:(M,g)\rightarrow (N,h)$ be a biharmonic map from  a complete Riemannian manifold $(M,g)$ into a Riemannian manifold $(N,h)$ with non-positive sectional curvature
and let $\alpha$ be a real constant satisfying $1<\alpha<\infty$.

$(i)$ If
$$\int_M|\tau(\p)|^{\alpha}dv_g<\infty,$$
and the energy is finite, that is, 
$$\int_M|d\p|^2dv_g<\infty,$$
then $\phi$ is harmonic.

$(ii)$ If ${\rm Vol}(M,g)=\infty$ and
$$\int_M|\tau(\p)|^{\alpha}dv_g<\infty,$$
then $\phi$ is harmonic.
\end{thm}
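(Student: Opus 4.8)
The plan is to adapt the Bochner-type argument of Nakauchi--Urakawa--Gudmundsson (Theorem~\ref{Th of N-U-G}) by replacing their $L^2$-cutoff estimate with a more flexible one that only requires an $L^\alpha$ bound on $\tau(\p)$. First I would record the basic Bochner formula for biharmonic maps: since $\tau_2(\p)=0$ and $N$ has non-positive sectional curvature, testing $\tau_2(\p)=0$ against $\eta^2\tau(\p)$ for a compactly supported Lipschitz function $\eta$ gives, after integration by parts,
\begin{equation*}
\int_M \eta^2|\onabla\tau(\p)|^2\,dv_g + \int_M \eta^2\sum_{i=1}^m\la R^N(\tau(\p),d\p(e_i))d\p(e_i),\tau(\p)\ra\,dv_g = -2\int_M \eta\,\la \onabla_{\nabla\eta}\tau(\p),\tau(\p)\ra\,dv_g.
\end{equation*}
The curvature term is $\le 0$, so both $\int_M\eta^2|\onabla\tau(\p)|^2\,dv_g$ and (using Kato's inequality $|\nabla|\tau(\p)||\le|\onabla\tau(\p)|$) $\int_M\eta^2|\nabla|\tau(\p)||^2\,dv_g$ are controlled by the right-hand side, which by Cauchy--Schwarz is bounded by $2\bigl(\int_M\eta^2|\onabla\tau(\p)|^2\bigr)^{1/2}\bigl(\int_M|\nabla\eta|^2|\tau(\p)|^2\bigr)^{1/2}$. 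Hence
\begin{equation*}
\int_M \eta^2|\onabla\tau(\p)|^2\,dv_g \le 4\int_M|\nabla\eta|^2|\tau(\p)|^2\,dv_g.
\end{equation*}

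Next I would choose the standard logarithmic cutoff on geodesic balls: fix $p\in M$, and for $R>1$ let $\eta=\eta_{R}$ be $1$ on $B_R(p)$, $0$ outside $B_{R^2}(p)$, and interpolated so that $|\nabla\eta_R|\le C/(r\log R)$ on the annulus $B_{R^2}(p)\setminus B_R(p)$, where $r=\dist(p,\cdot)$. For part~(ii), where $\vol(M,g)=\infty$ and only $\int_M|\tau(\p)|^\alpha<\infty$ is assumed, I would estimate the right-hand side by H\"older's inequality with exponents $\alpha$ and $\alpha/(\alpha-1)$:
\begin{equation*}
\int_M|\nabla\eta_R|^2|\tau(\p)|^2\,dv_g \le \Bigl(\int_{A_R}|\tau(\p)|^{\alpha}\,dv_g\Bigr)^{2/\alpha}\Bigl(\int_{A_R}|\nabla\eta_R|^{\frac{2\alpha}{\alpha-2}}\,dv_g\Bigr)^{\frac{\alpha-2}{\alpha}},
\end{equation*}
when $\alpha>2$; for $1<\alpha\le 2$ one must instead interpolate $\int|\tau|^2$ between $\int|\tau|^\alpha$ and a uniform bound, which is where the $L^2$-energy hypothesis in part~(i) enters. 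The first factor tends to $0$ as $R\to\infty$ by dominated convergence since $\int_M|\tau(\p)|^\alpha<\infty$. The point is to show the second factor stays bounded (or grows slowly enough to be killed): with the logarithmic cutoff $\int|\nabla\eta_R|^{2\alpha/(\alpha-2)}$ involves a power of $1/\log R$ times a volume-growth integral, and a Calabi--Yau--Karp type argument shows that if $\int_M|\tau(\p)|^\alpha<\infty$ on a complete manifold then the relevant weighted volume cannot grow too fast; alternatively one uses that $\onabla\tau(\p)\in L^2_{\l}$ together with a Caccioppoli iteration. Letting $R\to\infty$ forces $\int_M|\onabla\tau(\p)|^2\,dv_g=0$, hence $\tau(\p)$ is parallel, and then $|\tau(\p)|$ is a constant $c\ge 0$; if $\vol(M,g)=\infty$ then $c>0$ would violate $\int_M|\tau(\p)|^\alpha<\infty$, so $c=0$ and $\p$ is harmonic. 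For part~(i), once $\tau(\p)$ is shown parallel the finiteness of the energy $\int_M|d\p|^2<\infty$ is used in place of infinite volume: a further integration-by-parts identity (integrating $\la\tau(\p),\tau(\p)\ra=\d(\text{something built from }d\p)$, essentially $\int_M|\tau(\p)|^2 = -\int_M\la d\p,\onabla\tau(\p)\ra$ after a cutoff) combined with $\onabla\tau(\p)=0$ and $\int_M|d\p|^2<\infty$ yields $\int_M|\tau(\p)|^2=0$.

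The main obstacle I expect is the case $1<\alpha\le 2$ and, more generally, controlling the error term $\int_M|\nabla\eta_R|^2|\tau(\p)|^2\,dv_g$ without any a priori volume-growth or Sobolev assumption on $(M,g)$. The clean H\"older split above only directly works for $\alpha>2$; for smaller $\alpha$ one needs the energy hypothesis, and the delicate point is to interpolate $|\tau(\p)|^2$ appropriately and to verify that the resulting cutoff integral $\int_{A_R}|\nabla\eta_R|^{q}\,dv_g$ (for the conjugate exponent $q$) is bounded as $R\to\infty$ — this is exactly where the choice of a two-parameter or logarithmic cutoff, rather than a linear one, is essential, and where one must be careful that $2\alpha/(\alpha-2)$ can be large. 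A secondary subtlety is justifying all the integrations by parts on a merely complete (non-compact, possibly infinite-volume) manifold, i.e. checking that the relevant quantities are in $L^2_{\l}$ so that the cutoff computations are legitimate; this follows from standard elliptic regularity for the biharmonic system but should be stated. Once the error term is shown to vanish in the limit, the rest (parallelism $\Rightarrow$ constancy $\Rightarrow$ harmonicity) is routine given the volume or energy hypothesis.
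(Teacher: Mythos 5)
There is a genuine gap, and it is precisely at the point you flag as ``the main obstacle.'' Your cutoff identity is obtained by testing the biharmonic equation against $\eta^2\tau(\p)$, which inevitably produces the error term $\int_M|\nabla\eta|^2|\tau(\p)|^2dv_g$; this is an $L^2$ quantity, and the hypotheses give you only $\int_M|\tau(\p)|^{\alpha}dv_g<\infty$. Neither of your proposed repairs closes this. For $\alpha>2$, the H\"older split leaves the factor $\bigl(\int_{A_R}|\nabla\eta_R|^{2\alpha/(\alpha-2)}dv_g\bigr)^{(\alpha-2)/\alpha}$, and there is no Calabi--Yau--Karp-type volume estimate available: finiteness of $\int_M|\tau(\p)|^{\alpha}dv_g$ imposes no restriction whatsoever on the volume growth of $(M,g)$ (e.g.\ $\tau(\p)$ could vanish, or decay rapidly, on most of $M$ while the volume grows arbitrarily fast), so the second factor need not stay bounded even with a logarithmic cutoff. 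For $1<\alpha\le 2$, the interpolation you invoke would require a uniform bound on $|\tau(\p)|$ or an $L^2$ bound on $\tau(\p)$; the energy hypothesis $\int_M|d\p|^2dv_g<\infty$ bounds neither (it controls $d\p$, not its covariant derivatives), so it cannot play that role. Also note that the energy hypothesis appears only in part (i) and the conclusion $\onabla\tau(\p)=0$ must be reached in part (ii) without it, so any argument that leans on the energy to establish parallelism of $\tau(\p)$ is structurally misplaced.

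The idea you are missing is to change the test section rather than the cutoff: the paper pairs $\tau_2(\p)=0$ with $\lambda^2|\tau(\p)|^{\alpha-2}\tau(\p)$, where $\lambda$ is an ordinary linear cutoff with $|\nabla\lambda|\le C/r$. Integration by parts then yields a main term $\int_M\lambda^2|\tau(\p)|^{\alpha-2}|\onabla\tau(\p)|^2dv_g$, a term $(\alpha-2)\int_M\lambda^2|\tau(\p)|^{\alpha-4}\langle\onabla\tau(\p),\tau(\p)\rangle^2dv_g$ (nonnegative when $\alpha\ge2$, and when $\alpha<2$ absorbable by Cauchy--Schwarz with surviving coefficient $\alpha-1-\varepsilon>0$), and a cross term which Young's inequality bounds by $\varepsilon\int_M\lambda^2|\tau(\p)|^{\alpha-2}|\onabla\tau(\p)|^2dv_g+\frac{1}{\varepsilon}\int_M|\nabla\lambda|^2|\tau(\p)|^{\alpha}dv_g$. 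The error term is now exactly $\frac{C^2}{\varepsilon r^2}\int_M|\tau(\p)|^{\alpha}dv_g$, which tends to zero as $r\to\infty$ using only the stated $L^{\alpha}$ hypothesis --- no volume growth, no Sobolev inequality, no log cutoff. This gives $\onabla_X\tau(\p)=0$ and $|\tau(\p)|$ constant under the $L^{\alpha}$ assumption alone, in both cases. Afterwards, (ii) is immediate from infinite volume, and (i) follows by applying Gaffney's theorem to $\omega(X)=|\tau(\p)|^{\frac{\alpha}{2}-1}\langle d\p(X),\tau(\p)\rangle$, whose integrability is exactly $\bigl(\int_M|d\p|^2dv_g\bigr)^{1/2}\bigl(\int_M|\tau(\p)|^{\alpha}dv_g\bigr)^{1/2}<\infty$ and whose codifferential is $-|\tau(\p)|^{\frac{\alpha}{2}+1}$; your closing step for (i) is the right idea but should use this weighted form so that its integrability follows from the given hypotheses rather than from an unproved $\int_M|\tau(\p)|^2dv_g<\infty$.
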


\vspace{5pt}

If a biharmonic map $\p:(M,g)\rightarrow (N,h)$ is an isometric immersion, that is, $M$ is a biharmonic submanifold in $N$, then we have $\tau(\p)=m{\bf H}$, where ${\bf H}$ is the mean curvature vector field of $M.$
 For biharmonic submanifolds, we obtain that the assumption $\int_M|d\p|^2dv_g<\infty$
 in the above theorem is not necessary. 
 \vspace{5pt}

\begin{thm}[cf. Theorem $\ref{Th sub mfd}$]
Let $\p:(M,g)\rightarrow (N,h)$ be a biharmonic isometric immersion from  a complete Riemannian manifold $(M,g)$ into a Riemannian manifold $(N,h)$ with non-positive sectional curvature 
and let $\alpha$ be a real constant satisfying $1<\alpha<\infty$.

If 
$$\int_M|{\bf H}|^{\alpha}dv_g<\infty,$$
then $\phi$ is harmonic.
\end{thm}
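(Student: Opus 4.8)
The plan is to reduce the submanifold statement to the case $\alpha = 2$ handled by Theorem~\ref{Th of N-U-G}, or rather to redo the Nakauchi--Urakawa--Gudmundsson argument with a cutoff so that the energy hypothesis is never invoked. Recall the standard starting point: for a biharmonic map, pairing $\tau_2(\p)=0$ with $\tau(\p)$ and integrating against a compactly supported cutoff $\eta^2$ (with $\eta \equiv 1$ on a geodesic ball $B_R$, supported in $B_{2R}$, $|\nabla\eta|\le C/R$) yields, after integration by parts,
\begin{equation*}
\int_M \eta^2 |\onabla\tau(\p)|^2\,dv_g + \int_M \eta^2 \sum_{i=1}^m \la R^N(\tau(\p),d\p(e_i))d\p(e_i),\tau(\p)\ra\,dv_g = -2\int_M \eta\, \la \onabla\tau(\p), \tau(\p)\otimes d\eta\ra\,dv_g.
\end{equation*}
The curvature term is $\le 0$ by the non-positivity hypothesis, so it can be dropped from the left-hand side, and Cauchy--Schwarz plus Young's inequality on the right gives $\int_M \eta^2|\onabla\tau(\p)|^2\,dv_g \le C R^{-2}\int_{B_{2R}\setminus B_R}|\tau(\p)|^2\,dv_g$. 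For a submanifold, $\tau(\p) = m\mathbf{H}$, so the right-hand side is controlled by $\int_{B_{2R}\setminus B_R}|\mathbf H|^2\,dv_g$.

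The key new ingredient, and the reason the energy assumption becomes superfluous for submanifolds, is that when $\p$ is an isometric immersion one has the pointwise identity $|d\p|^2 = m$, hence $|\onabla\tau(\p)|^2$ already controls the full second fundamental form derivative in the direction that matters, and more importantly the right-hand side involves only $|\mathbf H|$, not $|d\p|$. So the first step I would carry out is to establish, via the above cutoff computation, that
\begin{equation*}
\int_M \eta^2|\onabla\tau(\p)|^2\,dv_g \le \frac{C}{R^2}\int_{B_{2R}\setminus B_R}|\tau(\p)|^2\,dv_g,
\end{equation*}
and then to show the right-hand side tends to $0$ as $R\to\infty$ purely from $\int_M|\tau(\p)|^\alpha\,dv_g<\infty$ with $\alpha>1$. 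For $\alpha = 2$ this is immediate (tail of a convergent integral). For $1<\alpha<2$ I would use Hölder: $\int_{B_{2R}\setminus B_R}|\tau(\p)|^2\,dv_g \le \big(\int_{B_{2R}\setminus B_R}|\tau(\p)|^\alpha\big)^{2/\alpha}\,\vol(B_{2R}\setminus B_R)^{1-2/\alpha}$; here $1-2/\alpha<0$, so a volume \emph{lower} bound is needed — but actually the cleaner route is to keep the cutoff weighted differently (use $\eta^{2\beta}$ for suitable $\beta$, or interpolate the gradient term), which I will return to. For $\alpha\ge 2$, Hölder runs the other way and needs a volume upper bound, which is again not available in general; the honest fix is to work directly with $|\tau(\p)|^{\alpha-2}$ as the weight rather than with $\eta^2$ alone.

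Concretely, then, the second and main step is to run the Bochner-type argument with the test "function" $\eta^2|\tau(\p)|^{\alpha-2}\tau(\p)$ in place of $\eta^2\tau(\p)$ — i.e. to reproduce the computation that in the harmonic-map literature produces a differential inequality for $u = |\tau(\p)|$, roughly $u\Delta u + |\nabla u|^2 \ge -(\text{curvature})\ge 0$ away from the zero set, and then to apply an $L^\alpha$ Liouville-type theorem for subharmonic-like quantities on complete manifolds (the Yau/Karp $L^p$ Liouville theorem, valid for $1<p<\infty$ with no volume or curvature assumption on $M$). The point is that $\Delta |\tau(\p)|^2 \ge 2|\onabla\tau(\p)|^2 - 2\sum_i \la R^N(\tau(\p),d\p(e_i))d\p(e_i),\tau(\p)\ra \ge 2|\onabla\tau(\p)|^2 \ge 0$ wherever $\tau(\p)\ne 0$ — using biharmonicity and $K^N\le 0$ — so $|\tau(\p)|^2$ is subharmonic, and $|\tau(\p)|\in L^\alpha$ (since $|\tau(\p)|^2\in L^{\alpha/2}$... need care: one wants $|\tau(\p)|$ subharmonic in an $L^p$ sense, which follows by Kato from $|\tau(\p)|^2$ subharmonic). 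Karp's theorem then forces $|\tau(\p)|$ constant, and a second application of the cutoff estimate (or $\vol(M)=\infty$ / the energy finiteness in the map case) forces that constant to be $0$. The main obstacle I anticipate is precisely the bookkeeping at the zero set of $\tau(\p)$ and the rigorous passage from "$|\tau(\p)|^2$ subharmonic" to the hypothesis of the $L^\alpha$-Liouville theorem — this is where one must be careful that no volume or energy bound sneaks in, and it is exactly the place where the submanifold hypothesis $|d\p|^2\equiv m$ does real work, because it removes the $|d\p|$ factors that would otherwise have to be absorbed using $\int_M|d\p|^2\,dv_g<\infty$.
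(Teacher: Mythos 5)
There is a genuine gap, and it sits exactly at the point your proposal waves at most quickly: passing from ``$|\tau(\p)|$ is constant'' to ``$\tau(\p)=0$''. Your first half is essentially sound and matches the paper's Lemma~\ref{key lem 1}: testing the biharmonic equation against $\eta^2|\tau(\p)|^{\alpha-2}\tau(\p)$ (your ``honest fix'') and letting $R\to\infty$ yields $\onabla_X\tau(\p)=0$ for all $X$ and hence $|\tau(\p)|\equiv c$, using only $\int_M|\tau(\p)|^\alpha dv_g<\infty$ with $\alpha>1$ and completeness. (Your alternative detour through subharmonicity of $|\tau(\p)|^2$ and Karp's $L^p$ Liouville theorem could also be made to work, but it buys nothing here and creates the zero-set/Kato issues you yourself flag; the direct weighted cutoff is cleaner and is what the paper does.) The problem is your last sentence of the argument: ``a second application of the cutoff estimate (or $\vol(M)=\infty$ / the energy finiteness in the map case) forces that constant to be $0$.'' The cutoff estimate only ever produces $\onabla\tau(\p)=0$, not $\tau(\p)=0$; and $\vol(M,g)=\infty$ and finite energy are precisely the hypotheses that Theorem~\ref{Th sub mfd} is designed to drop. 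If $\vol(M,g)<\infty$, the finiteness of $\int_M|{\bf H}|^\alpha dv_g$ is consistent with any constant value of $|{\bf H}|$, so your argument as written does not close.

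The missing idea is an algebraic one specific to isometric immersions: since ${\bf H}$ is normal, $\la d\p(X),{\bf H}\ra\equiv 0$ for every $X\in\frak{X}(M)$. Differentiating this identity and using $\onabla_{e_i}{\bf H}=0$ from the first step gives
\begin{equation*}
0=\sum_{i=1}^m\Big\{e_i\la d\p(e_i),{\bf H}\ra-\la d\p(\nabla_{e_i}e_i),{\bf H}\ra\Big\}
=\sum_{i=1}^m\la \onabla_{e_i}d\p(e_i)-d\p(\nabla_{e_i}e_i),{\bf H}\ra
=\la\tau(\p),{\bf H}\ra=m|{\bf H}|^2
\end{equation*}
pointwise, with no integration, no Gaffney-type theorem, and no volume or energy hypothesis. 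Equivalently: the $1$-form $\omega(X)=|\tau(\p)|^{\frac{\alpha}{2}-1}\la d\p(X),\tau(\p)\ra$ used in the map case is identically zero for an immersion, so its divergence $|\tau(\p)|^{\frac{\alpha}{2}+1}$ vanishes without invoking Stokes. This orthogonality step, not the identity $|d\p|^2=m$, is where the submanifold hypothesis ``does real work'' and is what your proposal lacks.
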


\begin{rem}
In the above theorem, in case of codimension one, we only need the weaker assumption, non-positivity of the Ricci curvature of $(N,h)$.
\end{rem}

\vspace{5pt}

These results give affirmative partial answers to global version of generalized Chen's conjecture (Conjecture $\ref{N-U}$).

The remaining sections are organized as follows. 
Section~$\ref{Pre}$ contains some necessary definitions and preliminary geometric results.
 In section~$\ref{map}$, we consider a biharmonic map and isometric immersion from a complete Riemannian manifold into a non-positively curved manifold.
 In section~$\ref{sub}$, we apply the result of Section~$\ref{map}$ to a biharmonic submersion.

\qquad\\

\section{Preliminaries}\label{Pre} 

In this section, we shall give the definitions of harmonic maps and biharmonic maps.
We also recall biharmonic submanifolds.

Let $(M,g)$ be an $m$-dimensional Riemannian manifold and $(N,h)$, an $n$-dimensional Riemannian manifold, respectively.
We denote by $\nabla$ and $\nabla^N$, the Levi-Civita connections on $(M,g)$ and $(N,h)$, respectively and by $\onabla$ the induced connection on $\p^{-1}TN$.

\vspace{10pt}

Let us recall the definition of a harmonic map $\p:(M,g)\rightarrow (N,h)$.
For a smooth map $\phi:(M,g)\rightarrow (N,h)$, the {\em energy} of $\phi$ is defined by
$$E(\phi) =\frac{1}{2}\int_M|d\phi|^2 dv_g.$$
The Euler-Lagrange equation of $E$ is 
$$\tau(\p)=\displaystyle \sum^m_{i=1}\{\onabla_{e_i}d\p(e_i)-d\p(\nabla_{e_i}e_i)\}=0,$$
where $\tau(\p)$ is called the {\em tension field} of $\p$ and $\{e_i\}_{i=1}^m$ is an orthonormal frame field on $M$.
 A map $\p:(M,g)\rightarrow (N,h)$ is called a {\em harmonic map} if $\tau(\p)=0$. 

\vspace{10pt}

In 1983, J. Eells and L. Lemaire \cite{jell1} proposed the problem to consider biharmonic maps which are critical points of the bi-energy functional on the space of smooth maps between two Riemannian manifolds.
In 1986, G. Y. Jiang \cite{jg1} derived the first and the second variational formulas of bi-energy and studied biharmonic maps.
For a smooth map $\phi:(M,g)\rightarrow (N,h)$, the  {\em bi-energy} of $\phi$ is defined by
$$E_2 (\phi )=\frac{1}{2}\int_M |\tau (\phi)| ^2 dv_g.$$
The Euler-Lagrange equation of $E_2$ is 
\begin{equation}\label{NSbi}
\tau_2(\phi)=-\Delta^{\phi} \tau (\phi ) -\sum^m_{i=1} R^N (\tau (\phi )  , d\phi (e_i))d\phi (e_i)=0,
\end{equation}
where $\tau_2(\p)$ is called the {\em bi-tension field} of $\p$ and $R^N$ is the Riemannian curvature tensor of $(N,h)$ given by $R^N(X,Y)Z=\nabla^N_X\nabla^N_YZ-\nabla^N_Y\nabla^N_XZ-\nabla^N_{[X,Y]}Z$ for $X,\ Y,\ Z\in \frak{X}(N)$.
 A map $\p:(M,g)\rightarrow (N,h)$ is called a {\em biharmonic map} if $\tau_2(\p)=0$. 

\vspace{10pt}

We also recall biharmonic submanifolds.

Let $\p:(M^m,g)\rightarrow (N^n,h=\met)$ be an isometric immersion from an $m$-dimensional Riemannian manifold into an $n$-dimensional Riemannian manifold.
In this case, we identify $d\p(X)$ with $X\in \frak{X}(M)$ for each $x\in M.$
We also denote by $\met$ the induced metric $\p^{-1}h$.
The Gauss formula is given by
\begin{equation}
\nabla^N_XY=\nabla _XY+B(X,Y),\ \ \ \ X,Y\in \frak{X}(M),
\end{equation}
where $B$ is the second fundamental form of $M$ in $N$.
The Weingarten formula is given by
\begin{equation}\label{2.Wformula}
\nabla^N_X \xi =-A_{\xi}X+\nabla^{\perp}_X{\xi},\ \ \ X\in \frak{X}(M),\  \xi \in \frak{X}(M)^{\perp},  
\end{equation}
where $A_{\xi}$ is the shape operator for a unit normal vector field $\xi$ on $M,$ and $\nabla^{\perp}$ denotes the normal connection on the normal bundle of $M$ in $N$.
It is well known that $B$ and $A$ are related by
\begin{equation}\label{2.BA rel}
\langle B(X,Y), \xi \rangle=\langle A_{\xi}X,Y \rangle.
\end{equation}

For any $x \in M$, let $\{e_1, \cdots, e_m, e_{m+1}, \cdots, e_n\}$ be an orthonormal basis of $N$ at $x$ such that $\{e_1, \cdots, e_m\}$ is an orthonormal basis of $T_xM$. 
Then, $B$ is decomposed as 
$$ 
B(X, Y) = \sum_{\alpha=m+1}^n B_{\alpha}(X, Y)e_{\alpha},~~{\rm at}~x. 
$$ 
The mean curvature vector field ${\bf H}$ of $M$ at $x$ is also given by 
$$ 
{\bf H}(x) = \frac{1}{m} \sum_{i = 1}^m B(e_i, e_i) =\sum_{\alpha=m+1}^n H_{\alpha}(x)e_{\alpha},\qquad 
H_{\alpha}(x) := \frac{1}{m} \sum_{i = 1}^m B_{\alpha}(e_i, e_i).  
$$ 

If an isometric immersion $\p:(M,g)\rightarrow (N,h)$ is biharmonic, then $M$ is called a {\em biharmonic submanifold} in $N$.
 In this case, we remark that the tension field $\tau(\p)$ of $\p$ is written as $\tau(\p)=m{\bf H}$, where ${\bf H}$ is the mean curvature vector field of $M$.
The necessary and sufficient condition for $M$ in $N$ to be biharmonic is the following: 
\begin{equation}\label{NS bih sub}
\Delta^{\p}{\bf H}+\sum_{i=1}^mR^N({\bf H},d\p(e_i))d\p(e_i)=0.
\end{equation}

\qquad\\


\section{Biharmonic maps into non-positively curved manifolds}\label{map} 
In this section, we shall show the following theorem.

\vspace{5pt}

\begin{thm}
\label{Th map}
Let $\p:(M,g)\rightarrow (N,h)$ be a biharmonic map from  a complete Riemannian manifold $(M,g)$ into a Riemannian manifold $(N,h)$ with non-positive sectional curvature
and let $\alpha$ be a real constant satisfying $1<\alpha<\infty$.

$(i)$ If
$$\int_M|\tau(\p)|^{\alpha}dv_g<\infty,$$
and the energy is finite, that is, 
$$\int_M|d\p|^2dv_g<\infty,$$
then $\phi$ is harmonic.

$(ii)$ If ${\rm Vol}(M,g)=\infty$ and
$$\int_M|\tau(\p)|^{\alpha}dv_g<\infty,$$
then $\phi$ is harmonic.

\end{thm}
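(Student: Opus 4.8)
The plan is to combine a weighted Bochner-type argument with a cut-off function and, for the finite-volume case, an auxiliary divergence identity. Since $(M,g)$ is complete, fix $x_0\in M$ and for each $R>0$ choose $\eta=\eta_R\in C^\infty_0(M)$ with $0\le\eta\le 1$, $\eta\equiv 1$ on the geodesic ball $B_R(x_0)$, $\mathrm{supp}\,\eta\subset B_{2R}(x_0)$, and $|d\eta|\le C/R$; all integrations by parts below are legitimate because the cut-offs are compactly supported and $\phi$ is smooth.

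\emph{Step 1: $\tau(\phi)$ is parallel.} Pair the biharmonic equation \eqref{NSbi} with the section $\eta^{2}\,(|\tau(\phi)|^{2}+\varepsilon)^{(\alpha-2)/2}\,\tau(\phi)$ (the $\varepsilon>0$ regularizes across the zero set of $\tau(\phi)$) and integrate over $M$. Integrating the term coming from $-\Delta^{\phi}\tau(\phi)$ by parts yields, on the left, $\int_M\eta^{2}(|\tau(\phi)|^{2}+\varepsilon)^{(\alpha-2)/2}|\onabla\tau(\phi)|^{2}\,dv_g$, a ``Kato term'' proportional to $(\alpha-2)\int_M\eta^{2}(|\tau(\phi)|^{2}+\varepsilon)^{(\alpha-2)/2}|\nabla|\tau(\phi)||^{2}\,dv_g$ (kept as is when $\alpha\ge 2$, and, when $1<\alpha<2$, bounded below via the Kato inequality $|\nabla|\tau(\phi)||\le|\onabla\tau(\phi)|$ at the cost of a factor $\alpha-1$), and a cross term involving $d\eta$. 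On the right, non-positivity of the sectional curvature of $(N,h)$ forces $\sum_i\la R^N(\tau(\phi),d\phi(e_i))d\phi(e_i),\tau(\phi)\ra\le 0$, so the right-hand side is $\le 0$. Absorbing the cross term by Young's inequality, then letting $\varepsilon\to 0$ and $R\to\infty$ and invoking $\int_M|\tau(\phi)|^{\alpha}\,dv_g<\infty$, the surviving terms are forced to vanish; concretely one obtains $\int_M|\tau(\phi)|^{\alpha-2}|\onabla\tau(\phi)|^{2}\,dv_g=0$. Hence $\onabla\tau(\phi)\equiv 0$ on the open set $\{\tau(\phi)\ne 0\}$, and since $M$ is connected this means either $\tau(\phi)\equiv 0$ (and $\phi$ is harmonic) or $|\tau(\phi)|\equiv c>0$ with $\onabla\tau(\phi)\equiv 0$ on all of $M$.

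\emph{Steps 2--3: ruling out $c>0$.} If $\vol(M,g)=\infty$, then $\int_M|\tau(\phi)|^{\alpha}\,dv_g=c^{\alpha}\,\vol(M,g)<\infty$ forces $c=0$; this settles $(ii)$, and also $(i)$ when the volume is infinite. If instead $\vol(M,g)<\infty$ (and, as in $(i)$, $\int_M|d\phi|^{2}\,dv_g<\infty$), consider the $1$-form $\omega$ on $M$ with $\omega(X)=\la d\phi(X),\tau(\phi)\ra$. Using $\sum_i\bigl(\onabla_{e_i}d\phi(e_i)-d\phi(\nabla_{e_i}e_i)\bigr)=\tau(\phi)$ together with $\onabla\tau(\phi)\equiv 0$ one computes $\d(\omega^{\sharp})=|\tau(\phi)|^{2}\equiv c^{2}$, while $|\omega^{\sharp}|\le|d\phi|\,|\tau(\phi)|=c\,|d\phi|$. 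Multiplying $c^{2}=\d(\omega^{\sharp})$ by $\eta^{2}$, integrating by parts and using Cauchy--Schwarz gives
\[
c^{2}\int_M\eta^{2}\,dv_g\ \le\ 2c\Bigl(\int_M|d\phi|^{2}|d\eta|^{2}\,dv_g\Bigr)^{1/2}\Bigl(\int_M\eta^{2}\,dv_g\Bigr)^{1/2}\ \le\ 2c\Bigl(\tfrac{C}{R^{2}}\!\int_{B_{2R}}\!|d\phi|^{2}\,dv_g\Bigr)^{1/2}\vol(M,g)^{1/2}.
\]
As $R\to\infty$ the left side tends to $c^{2}\,\vol(M,g)$ while, because $\int_M|d\phi|^{2}\,dv_g<\infty$, the right side tends to $0$; hence $c=0$ and $\phi$ is harmonic.

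\emph{Main obstacle.} The delicate point is Step 1: carrying out the weighted Bochner computation with the weight $|\tau(\phi)|^{\alpha-2}$ for the full range $1<\alpha<\infty$. One has to regularize across the zeros of $\tau(\phi)$, control both the sign and the size of the Kato-type term separately for $\alpha\ge 2$ and for $1<\alpha<2$, and absorb the $d\eta$ cross term into the leading term; the constant produced this way degenerates as $\alpha\downarrow 1$, which is precisely why the hypothesis is $1<\alpha$ and not $\alpha\ge 1$. Step 3 is the device --- already underlying Theorem~\ref{Th of N-U-G} --- by which finiteness of the energy can substitute for infinite volume.
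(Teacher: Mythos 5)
Your proposal is correct and follows essentially the same route as the paper: the same cut-off/weighted test-section computation (the paper's Lemma~\ref{key lem 1}, likewise split into the cases $1<\alpha<2$ and $\alpha\ge 2$) yields $\onabla\tau(\p)\equiv 0$ and $|\tau(\p)|$ constant, and the same volume dichotomy together with the divergence identity for $X\mapsto\la d\p(X),\tau(\p)\ra$ finishes the argument. The only deviations are minor: you regularize the weight as $(|\tau(\p)|^{2}+\varepsilon)^{(\alpha-2)/2}$ (the paper works directly with $|\tau(\p)|^{\alpha-2}$, which is formally singular on the zero set of $\tau(\p)$ when $\alpha<2$, so your version is actually the more careful one), and in part $(i)$ you establish the final Stokes-type vanishing by hand with a second cut-off argument after splitting on ${\rm Vol}(M,g)$, whereas the paper packages exactly this step as an application of Gaffney's theorem (Theorem~\ref{Gaffney}) to the $1$-form $\omega(X)=|\tau(\p)|^{\frac{\alpha}{2}-1}\la d\p(X),\tau(\p)\ra$.
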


\vspace{5pt}

 Before proving Theorem $\ref{Th map}$, we shall show  the following lemma.

\begin{lem}\label{key lem 1}
Let $\p:(M,g)\rightarrow (N,h)$ be a biharmonic map from  a complete Riemannian manifold $(M,g)$ into a Riemannian manifold $(N,h)$ with non-positive sectional curvature.

Assume that $\alpha$ satisfies $1<\alpha<\infty$.
If for such an $\alpha$,
$$\int_M|\tau(\p)|^{\alpha}dv_g<\infty,$$
then $\onabla_X\tau(\p)=0$ for any vector field $X$ on $M$. In particular, $|\tau(\p)|$ is constant.
\end{lem}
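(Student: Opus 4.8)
The plan is to establish a Bochner-type inequality for $|\tau(\p)|^2$, then feed it into a Yau/Karp-type cut-off argument that uses only the integrability $\int_M |\tau(\p)|^\alpha\,dv_g<\infty$ with $1<\alpha<\infty$. First, starting from the biharmonicity equation $\Delta^\p\tau(\p)=-\sum_i R^N(\tau(\p),d\p(e_i))d\p(e_i)$, I take the inner product with $\tau(\p)$ and use the Weitzenb\"ock formula $\tfrac12\Delta|\tau(\p)|^2 = \langle \Delta^\p\tau(\p),\tau(\p)\rangle + |\onabla\tau(\p)|^2$. The curvature term on the right becomes $-\sum_i \langle R^N(\tau(\p),d\p(e_i))d\p(e_i),\tau(\p)\rangle$, which is $\geq 0$ because $N$ has non-positive sectional curvature (this is exactly the sign computation that makes the whole circle of results work). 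Hence one gets the differential inequality
\begin{equation}\label{eq:bochner-plan}
\tfrac12\Delta |\tau(\p)|^2 \;\geq\; |\onabla\tau(\p)|^2 \;\geq\; 0,
\end{equation}
so $|\tau(\p)|^2$ is subharmonic on $M$.

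Next, to exploit completeness I introduce the standard logarithmic cut-off functions: fix $x_0\in M$, for $R>1$ let $\eta_R$ be $1$ on $B_R(x_0)$, supported in $B_{R^2}(x_0)$, with $|\nabla\eta_R|\le C/(R\log R)$ pointwise (so that $\int |\nabla\eta_R|^2$-type contributions can be controlled). Write $f:=|\tau(\p)|$. Multiply \eqref{eq:bochner-plan} by $\eta_R^2 f^{p-2}$ for a suitable exponent $p$ related to $\alpha$, integrate over $M$, and integrate by parts. The point is that all boundary terms vanish since $\eta_R$ has compact support, and one is left estimating $\int \eta_R^2 f^{p-2}|\nabla f|^2$ and the Hessian term $\int \eta_R^2 f^{p-2}|\onabla\tau(\p)|^2$ against a cross term $\int \eta_R |\nabla\eta_R| f^{p-1}|\nabla f|$; Cauchy--Schwarz absorbs the cross term, and what remains on the right is bounded by $C\int_M |\nabla\eta_R|^2 f^p\le \frac{C}{(\log R)^2}\int_M f^p\,dv_g$. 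Choosing $p$ so that $\int_M f^p\,dv_g = \int_M |\tau(\p)|^\alpha\,dv_g<\infty$ (i.e. $p=\alpha$; the refinement $f^{p-2}|\nabla f|^2 = \tfrac{4}{p^2}|\nabla f^{p/2}|^2$ and Kato's inequality $|\nabla f|\le |\onabla\tau(\p)|$ handle the case $f$ vanishes), letting $R\to\infty$ forces $\int_M \eta_R^2 f^{p-2}|\onabla\tau(\p)|^2\to 0$, hence $\onabla\tau(\p)\equiv 0$ on all of $M$. In particular $|\tau(\p)|$ is constant, which is the claim.

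The main obstacle is the bookkeeping in the integration by parts when $\alpha$ is not an even integer (or even $\ge 2$): one cannot blindly write $f^{p-2}$ without worrying about the zero set of $f$, and the Kato inequality plus a small regularization $f_\varepsilon = \sqrt{f^2+\varepsilon^2}$ is needed to make all manipulations rigorous, then let $\varepsilon\to 0$. A secondary subtlety is choosing the test function's power correctly so that the "good" second-derivative term genuinely dominates and the only surviving term after Cauchy--Schwarz is the $|\nabla\eta_R|^2 f^\alpha$ term with the $1/(\log R)^2$ decay; here the hypothesis $\alpha>1$ is what guarantees the relevant constant in the Cauchy--Schwarz absorption is strictly favorable (the borderline $\alpha=1$ is genuinely excluded, and $\alpha<\infty$ is needed so that the decay rate is not overwhelmed). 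Once $\onabla\tau(\p)=0$ everywhere, constancy of $|\tau(\p)|$ is immediate since $X|\tau(\p)|^2 = 2\langle\onabla_X\tau(\p),\tau(\p)\rangle=0$ for all $X$.
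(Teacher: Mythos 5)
Your proposal is correct and follows essentially the same route as the paper: the paper pairs the bitension equation directly with $\lambda^2|\tau(\p)|^{\alpha-2}\tau(\p)$ and integrates by parts, which is exactly your Bochner-plus-test-function computation carried out in one step, with the same curvature sign, the same Young/Cauchy--Schwarz absorption, and the same role for the hypothesis $\alpha>1$. The only differences are cosmetic --- the paper uses linear cut-offs with $|\nabla\lambda|\le C/r$ rather than logarithmic ones and splits explicitly into the cases $\alpha<2$ and $\alpha\ge 2$ where you invoke Kato's inequality --- and your remark about regularizing $|\tau(\p)|$ near its zero set when $\alpha<2$ addresses a point the paper passes over silently.
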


\begin{proof}
For a fixed point $x_0\in M$, and for every $0<r<\infty,$
 we first take a cut off function $\lambda$ on $M$ satisfying that 
 \begin{equation}
\left\{
 \begin{aligned}
&0\leq\lambda(x)\leq1\ \ \ (x\in M),\\
&\lambda(x)=1\ \ \ \ \ \ \ \ \ (x\in B_r(x_0)),\\
&\lambda(x)=0\ \ \ \ \ \ \ \ \ (x\not\in B_{2r}(x_0)),\\
&|\nabla\lambda|\leq\frac{C}{r}\ \ \ \ \ \ \ (x\in M),\ \ \ \text{for some constant $C$ independent of $r$},
\end{aligned} 
\right.
\end{equation}
where $B_r(x_0)$ and  $B_{2r}(x_0)$ are the balls centered at a fixed point $x_0\in M$ with radius $r$ and $2r$ respectively (cf. \cite{ka1}).
From $(\ref{NSbi})$, we have
\begin{equation}\label{0}
\begin{aligned}
&\int_M\langle -\Delta^{\phi}\tau(\p),\lambda^2|\tau(\p)|^{\alpha-2}\tau(\p)\rangle dv_g\\
&=\int_M\lambda^2|\tau(\p)|^{\alpha-2}\sum^m_{i=1}\langle R^N(\tau(\p),d\p(e_i))d\p(e_i),\tau(\p)\rangle dv_g\leq0,
\end{aligned}
\end{equation}
where the inequality follows from the sectional curvature of $(N,h)$ is non-positive.
By $(\ref{0})$, we have
\begin{equation}\label{branch}
\begin{aligned}
0\geq&\int_M\langle -\Delta^{\phi}\tau(\p),\lambda^2|\tau(\p)|^{\alpha-2}\tau(\p)\rangle dv_g\\
=&\int_M\langle \onabla\tau(\p),\onabla(\lambda^2|\tau(\p)|^{\alpha-2}\tau(\p))\rangle dv_g\\
=&\int_M\sum_{i=1}^m\langle \onabla_{e_i}\tau(\p),(e_i\lambda^2)|\tau(\p)|^{\alpha-2}\tau(\p)
+\lambda^2e_i\{(|\tau(\p)|^2)^{\frac{\alpha-2}{2}}\}\tau(\p)\\
&\hspace{180pt}+\lambda^2|\tau(\p)|^{\alpha-2}\onabla_{e_i}\tau(\p)\rangle dv_g\\
=&\int_M\sum_{i=1}^m\langle \onabla_{e_i}\tau(\p), 2\lambda(e_i\lambda)|\tau(\p)|^{\alpha-2}\tau(\p)\rangle dv_g\\
&+\int_M\sum_{i=1}^m\langle \onabla_{e_i}\tau(\p),\lambda^2(\alpha-2)|\tau(\p)|^{\alpha-4}\langle\onabla_{e_i}\tau(\p),\tau(\p)\rangle \tau(\p)\rangle dv_g\\
&+\int_M\sum_{i=1}^m\langle \onabla_{e_i}\tau(\p), \lambda^2 |\tau(\p)|^{\alpha-2} \onabla_{e_i}\tau(\p)\rangle dv_g.
\end{aligned}
\end{equation}
(i) The case of $\alpha<2$.
We have
\begin{equation}\label{ast}
\begin{aligned}
&\int_M\sum_{i=1}^m \lambda^2|\tau(\p)|^{\alpha-2}|\onabla_{e_i}\tau(\p)|^2 dv_g\\
&\leq-2\int_M\sum_{i=1}^m\langle \onabla_{e_i}\tau(\p), \lambda (e_i\lambda) |\tau(\p)|^{\alpha-2}\tau(\p)\rangle dv_g\\
&\ \ \ -(\alpha-2)\int_M\sum_{i=1}^m\langle \onabla_{e_i}\tau(\p),\lambda^2|\tau(\p)|^{\alpha-4}\langle \onabla _{e_i}\tau(\p),\tau(\p) \rangle \tau(\p) \rangle dv_g.
\end{aligned}
\end{equation}

We shall consider the first term of the right hand side of $(\ref{ast})$.

\begin{equation}\label{1}
\begin{aligned}
-2&\int_M\sum_{i=1}^m\langle \onabla_{e_i}\tau(\p),\lambda (e_i \lambda)|\tau(\p)|^{\alpha-2}\tau(\p)\rangle dv_g\\
=&-2\int_M\sum_{i=1}^m\langle \lambda |\tau(\p)|^{\frac{\alpha}{2}-1} \onabla_{e_i}\tau(\p),
(e_i \lambda ) |\tau(\p)|^{\frac{\alpha}{2}-1}\tau(\p)\rangle dv_g\\
\leq&~\varepsilon \int_M \sum_{i=1}^m \lambda^2 |\tau(\p)|^{\alpha-2} |\onabla_{e_i}\tau(\p)|^2dv_g\\
&+\frac{1}{\varepsilon}\int_M|\nabla\lambda|^2|\tau(\p)|^{\alpha}dv_g,
\end{aligned}
\end{equation} 
where the inequality of $(\ref{1})$ follows from the following inequality
\begin{equation}\label{Young's ineq.}
\pm2 \langle V, W\rangle \leq \varepsilon |V|^2+\frac{1}{\varepsilon}|W|^2,\ \ \ \ \ \text{for all positive}\  \varepsilon >0,
\end{equation}
because of the inequality $0\leq |\sqrt{\varepsilon}V\pm \frac{1}{\sqrt{\varepsilon}}W|^2$.
The inequality $(\ref{Young's ineq.})$ is called {\em Young's inequality}.

 We shall consider the second term of the right hand side of $(\ref{ast})$.

\begin{equation}\label{2}
\begin{aligned}
-(\alpha-2)&\int_M\sum_{i=1}^m\langle \onabla_{e_i}\tau(\p),\lambda^2|\tau(\p)|^{\alpha-4}\langle \onabla_{e_i}\tau(\p),\tau(\p)\rangle \tau(\p)\rangle dv_g\\
\leq&~(2-\alpha)\int_M\sum_{i=1}^m|\onabla_{e_i}\tau(\p)|^2\lambda^2|\tau(\p)|^{\alpha-2}dv_g,
\end{aligned}
\end{equation}
where the inequality of $(\ref{2})$ follows from Cauchy-Schwartz inequality.

Substituting $(\ref{1})$ and $(\ref{2})$ into $(\ref{ast})$, we have

\begin{equation}
\begin{aligned}
&\int_M\sum_{i=1}^m\lambda^2|\tau(\p)|^{\alpha-2}|\onabla_{e_i}\tau(\p)|^2dv_g\\
&\leq~\varepsilon \int_M\sum_{i=1}^m \lambda^2|\tau(\p)|^{\alpha-2} |\onabla_{e_i}\tau(\p)|^2dv_g\\
&\ \ \ \ \ \ \ \ +\frac{1}{\varepsilon}\int_M|\nabla \lambda|^2|\tau(\p)|^{\alpha}dv_g\\
&\ \ \ \ \ \ \ \ +(2-\alpha) \int_M\sum_{i=1}^m|\onabla_{e_i}\tau(\p)|^2\lambda^2|\tau(\p)|^{\alpha-2}dv_g.
\end{aligned}
\end{equation}
Thus we have

\begin{equation}\label{ast2}
\begin{aligned}
(-1-\varepsilon+\alpha)&\int_M\sum_{i=1}^m\lambda^2|\tau(\p)|^{\alpha-2}|\onabla_{e_i}\tau(\p)|^2dv_g\\
&\leq~\frac{1}{\varepsilon}\int_M|\nabla\lambda|^2|\tau(\p)|^{\alpha}dv_g\\
&\leq~\frac{1}{\varepsilon}\frac{C^2}{r^2}\int_M|\tau(\p)|^{\alpha}dv_g.
\end{aligned}
\end{equation}
Since $(M,g)$ is complete, we tend $r$ to infinity.
 By the assumption $\int_M|\tau(\p)|^{\alpha}dv_g<\infty$,
  the right hand side of $(\ref{ast2})$ goes to zero and the left hand side of $(\ref{ast2})$ goes to 
  $$(-1-\varepsilon+\alpha)\int_M\sum_{i=1}^m|\tau(\p)|^{\alpha-2}|\onabla_{e_i}\tau(\p)|^2dv_g,$$ since $\lambda=1$ on $B_r(x_0)$.
  Since we can take that $\varepsilon$ is sufficiently small, by the assumption  $1<\alpha$, we have 
  $$\int_M\sum_{i=1}^m|\tau(\p)|^{\alpha-2}|\onabla_{e_i}\tau(\p)|^2dv_g=0.$$
   From this, we obtain for any vector field $X$ on $M$,
  \begin{equation}\label{eq. for constant}
  \onabla_X\tau(\p)=0.
  \end{equation}
 By $(\ref{eq. for constant})$,
  $$X|\tau(\p)|^2=2\langle \onabla_X\tau(\p),\tau(\p)\rangle=0.$$
  Therefore we obtain $|\tau(\p)|$ is constant. 
  
  (ii) The case $\alpha \geq2$.
  From $(\ref{branch})$, we have
 \begin{equation}\label{i-1}
 \begin{aligned}
 0\geq 
 &\int_M\sum_{i=1}^m\langle \onabla_{e_i} \tau(\p), 2\lambda(e_i\lambda)| \tau(\p)|^{\alpha-2} \tau(\p)\rangle dv_g\\
&+\int_M\sum_{i=1}^m\langle \onabla_{e_i} \tau(\p), \lambda^2 | \tau(\p)|^{\alpha-2} \onabla_{e_i} \tau(\p)\rangle dv_g.
 \end{aligned}
 \end{equation}
By using Young' inequality, that is, 
$$\pm2\langle V,W \rangle \leq \varepsilon |V|^2+\frac{1}{\varepsilon}|W|^2,$$
for all positive $\varepsilon$, we have
\begin{equation*} 
\begin{aligned}
 &-2\int_M\sum_{i=1}^m\langle \onabla_{e_i} \tau(\p), \lambda(e_i\lambda)| \tau(\p)|^{\alpha-2} \tau(\p)\rangle dv_g\\
=&-2\int_M\sum_{i=1}^m\langle (e_i\lambda) | \tau(\p)|^{\frac{\alpha}{2}-1} \tau(\p),
\lambda | \tau(\p)|^{\frac{\alpha}{2}-1} \onabla_{e_i} \tau(\p)\rangle dv_g\\
\leq&~2\int_M|\nabla\lambda|^2| \tau(\p)|^{\alpha}dv_g\\
&+\frac{1}{2}\int_M\lambda^2|\tau(\p)|^{\alpha-2}|\onabla_{e_i}\tau(\p)|^2dv_g.
\end{aligned}
\end{equation*}
Substituting this into $(\ref{i-1})$, we have
\begin{equation}\label{ast20}
\begin{aligned}
\int_M\lambda^2|\tau(\p)|^{\alpha-2}|\onabla_{e_i}\tau(\p)|^2dv_g
\leq&~4\int_M|\nabla\lambda|^2 | \tau(\p)|^{\alpha}dv_g\\
\leq&\int_M \frac{4C^2}{r^2}| \tau(\p)|^{\alpha}dv_g.
\end{aligned}
\end{equation}
 By the assumption $\int_M| \tau(\p)|^{\alpha}dv_g<\infty$,
  the right hand side of $(\ref{ast20})$ goes to zero and the left hand side of $(\ref{ast20})$ goes to 
  $$\int_M|\tau(\p)|^{\alpha-2}|\onabla_{e_i}\tau(\p)|^2dv_g,$$
   since $\lambda=1$ on $B_r(x_0)$.
 Thus, we have
  $$\int_M|\tau(\p)|^{\alpha-2}|\onabla_{e_i}\tau(\p)|^2dv_g=0.$$
   From this, we obtain for any vector field $X$ on $M$,
  \begin{equation}\label{eq. for constant0}
  \onabla_X \tau(\p)=0.
  \end{equation}
 By $(\ref{eq. for constant0})$,
  $$X| \tau(\p)|^2=2\langle \onabla_X \tau(\p), \tau(\p)\rangle=0.$$
  Therefore we obtain $| \tau(\p)|$ is constant. 

\end{proof}

To prove Theorem $\ref{Th map}$, we recall Gaffney's theorem (cf. \cite{mpg1}).

\vspace{5pt}

\begin{thm}
[\cite{mpg1}]\label{Gaffney}
Let $(M,g)$ be a complete Riemannian manifold. 
If a $C^1$ 1-form $\omega$ satisfies that $\int_M|\omega|dv_g<\infty$ and $\int_M(\delta \omega)dv_g<\infty$, or equivalently, a $C^1$ vector field $X$ defined by $\omega (Y)=\langle X,Y\rangle $,\ \ $(\forall ~Y\in \frak{X}(M))$ satisfies that $\int_M|X|dv_g<\infty$ and $\int_M{\rm div}(X) dv_g<\infty$, then
$$\int_M(\delta \omega)dv_g=\int_M{\rm div}(X)dv_g=0.$$
\end{thm}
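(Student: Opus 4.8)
The plan is to reduce the statement to its vector field form and then prove $\int_M\mathrm{div}(X)\,dv_g=0$ by an exhaustion argument in the same spirit as the cutoff technique already used in Lemma~\ref{key lem 1}. The two formulations are equivalent because the codifferential of a $1$-form is minus the divergence of its metric dual, $\delta\omega=-\mathrm{div}(X)$, so that $\int_M\delta\omega\,dv_g=-\int_M\mathrm{div}(X)\,dv_g$ while $|\omega|=|X|$ pointwise; it therefore suffices to treat the $C^1$ vector field $X$ subject to $\int_M|X|\,dv_g<\infty$ and $\int_M\mathrm{div}(X)\,dv_g<\infty$. The essential role of completeness is that, by the Hopf--Rinow theorem, every closed ball $\overline{B_{2r}(x_0)}$ is compact, which both supplies cutoff functions of compact support and makes the classical divergence theorem applicable.

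Concretely, I would fix $x_0\in M$ and, for each $0<r<\infty$, take the same cutoff function $\lambda=\lambda_r$ as in Lemma~\ref{key lem 1}, so that $0\le\lambda\le1$, $\lambda\equiv1$ on $B_r(x_0)$, $\lambda\equiv0$ off $B_{2r}(x_0)$, and $|\nabla\lambda|\le C/r$ with $C$ independent of $r$. The vector field $\lambda X$ is then $C^1$ with support in the compact set $\overline{B_{2r}(x_0)}$, so the divergence theorem on the manifold without boundary gives $\int_M\mathrm{div}(\lambda X)\,dv_g=0$. Expanding by the product rule $\mathrm{div}(\lambda X)=\lambda\,\mathrm{div}(X)+\langle\nabla\lambda,X\rangle$ yields
\begin{equation*}
\int_M\lambda\,\mathrm{div}(X)\,dv_g=-\int_M\langle\nabla\lambda,X\rangle\,dv_g .
\end{equation*}

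It then remains to let $r\to\infty$ on both sides. For the left-hand side, $\lambda_r\to1$ pointwise with $|\lambda_r\,\mathrm{div}(X)|\le|\mathrm{div}(X)|\in L^1(M)$, so dominated convergence gives $\int_M\lambda_r\,\mathrm{div}(X)\,dv_g\to\int_M\mathrm{div}(X)\,dv_g$. For the right-hand side, $\nabla\lambda_r$ is supported in the annulus $B_{2r}(x_0)\setminus B_r(x_0)$, hence
\begin{equation*}
\left|\int_M\langle\nabla\lambda_r,X\rangle\,dv_g\right|\le\frac{C}{r}\int_{B_{2r}(x_0)\setminus B_r(x_0)}|X|\,dv_g\le\frac{C}{r}\int_M|X|\,dv_g\longrightarrow0,
\end{equation*}
using $\int_M|X|\,dv_g<\infty$. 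Combining the two limits gives $\int_M\mathrm{div}(X)\,dv_g=0$, and the equivalent identity $\int_M\delta\omega\,dv_g=0$ follows at once.

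The main technical point to secure is the regularity of the cutoff and the attendant validity of the divergence theorem for $\lambda_r X$. On a general complete manifold the distance function is only Lipschitz, so cutoffs built directly from it are Lipschitz rather than $C^1$; I would therefore either verify that the divergence theorem persists for compactly supported Lipschitz vector fields (the product rule holds almost everywhere and the boundary term vanishes), or mollify the distance function to produce genuinely $C^1$ cutoffs while preserving the gradient bound $|\nabla\lambda_r|\le C/r$, exactly as in the construction cited for Lemma~\ref{key lem 1}. Once this is in place, every remaining estimate is an elementary application of the gradient bound together with the two $L^1$ hypotheses.
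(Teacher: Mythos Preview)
The paper does not supply a proof of this statement; Theorem~\ref{Gaffney} is merely quoted from Gaffney's paper \cite{mpg1} as a black box to be applied in the proof of Theorem~\ref{Th map}. There is therefore nothing in the paper to compare your argument against.

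That said, your cutoff-and-exhaustion argument is exactly the standard proof of Gaffney's theorem and is correct. The only point worth flagging is the dominated-convergence step on the left-hand side: you invoke $|\lambda_r\,\mathrm{div}(X)|\le|\mathrm{div}(X)|\in L^1(M)$, which reads the hypothesis ``$\int_M\mathrm{div}(X)\,dv_g<\infty$'' as $\mathrm{div}(X)\in L^1(M)$. That is the natural interpretation here (and is what holds in the paper's application, where $-\delta\omega=|\tau(\phi)|^{\frac{\alpha}{2}+1}\ge0$ is a constant, hence integrable whenever $\int_M|\tau(\phi)|^{\alpha}dv_g<\infty$). If one prefers Gaffney's original one-sided hypothesis that $\mathrm{div}(X)$ is bounded below, the same computation still works: the right-hand side tends to $0$ as before, so $\limsup_{r\to\infty}\int_M\lambda_r\,\mathrm{div}(X)\,dv_g=0$, and Fatou's lemma (applied to $\mathrm{div}(X)+c\ge0$) then forces $\int_M\mathrm{div}(X)\,dv_g\le0$; repeating with $-X$ if $\mathrm{div}(X)$ is also bounded above, or using nonnegativity directly, gives equality. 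Your closing remarks on the Lipschitz-versus-$C^1$ regularity of the cutoff are accurate and dispose of the only genuine technicality.
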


\vspace{5pt}

By using Lemma $\ref{key lem 1}$ and Theorem $\ref{Gaffney}$, we shall show Theorem $\ref{Th map}$.

\begin{proof}[Proof of Theorem $\ref{Th map}$]
By Lemma ${\ref{key lem 1}}$, we have $\onabla _{X}\tau(\p)=0$ for any vector field $X$ on $M,$ and $|\tau(\p)|$ is constant.

We shall show the case $(ii)$. If ${\rm Vol}(M,g)=\infty$ and $|\tau(\p)|\not=0$, then
$$\int_M|\tau(\p)|^{\alpha}dv_g=|\tau(\p)|^{\alpha}{\rm Vol}(M,g)=\infty,$$
 which yields the contradiction.
 
We shall show the case $(i)$.
 Define a 1-form $\omega$ on $M$ by
$$\omega(X):=|\tau(\p)|^{\frac{\alpha}{2}-1}\langle d\p(X),\tau(\p)\rangle,\ \ \ \ \ (X\in \frak{X}(M)).$$
By the assumption $\int_M|d\p|^2dv_g<\infty$ and $\int_M|\tau(\p)|^{\alpha}dv_g<\infty$,
 we have

 \begin{equation}\label{Assumption of Gaffney 1}
 \begin{aligned}
 \int_M|\omega|dv_g
 =&\int_M\left(\sum_{i=1}^m|\omega(e_i)|^2\right)^{\frac{1}{2}}dv_g\\
 \leq&~\int_M|\tau(\p)|^{\frac{\alpha}{2}}|d\p|dv_g\\
 \leq&\left(\int_M|d\p|^2dv_g\right)^{\frac{1}{2}}\left(\int_M|\tau(\p)|^{\alpha}dv_g\right)^{\frac{1}{2}}<\infty.
 \end{aligned}
 \end{equation}
 We consider $-\delta \omega=\displaystyle\sum_{i=1}^m(\nabla_{e_i}\omega)(e_i)$.

 \begin{equation}
 \begin{aligned}
 -\delta \omega
 =&\sum_{i=1}^m\nabla_{e_i}(\omega(e_i))-\omega(\nabla_{e_i}e_i)\\
 =&\sum_{i=1}^m\Big\{\nabla_{e_i}\Big(|\tau(\p)|^{\frac{\alpha}{2}-1}\langle d\p(e_i),\tau(\p)\rangle\Big)\\
&\hspace{30pt}-|\tau(\p)|^{\frac{\alpha}{2}-1}\langle d\p(\nabla_{e_i}e_i),\tau(\p)\rangle\Big\}\\
=&\sum_{i=1}^m\Big\{ |\tau(\p)|^{\frac{\alpha}{2}-1}\langle \onabla_{e_i}d\p(e_i), \tau(\p)\rangle\\
&\hspace{30pt}-|\tau(\p)|^{\frac{\alpha}{2}-1}\langle d\p(\nabla_{e_i}e_i), \tau(\p)\rangle\Big\}\\
=&\sum_{i=1}^m\Big\{ |\tau(\p)|^{\frac{\alpha}{2}-1}\langle \onabla_{e_i}d\p(e_i)-d\p(\nabla_{e_i}e_i), \tau(\p)\rangle\Big\}\\
=&|\tau(\p)|^{\frac{\alpha}{2}+1},
 \end{aligned}
 \end{equation}
where the third equality follows from $|\tau(\p)|$ is constant and $\onabla_X\tau(\p)=0, ~~~(X\in \frak{X}(M)).$
 Since $|\tau(\p)|$ is constant and $\int_M|\tau(\p)|^{\alpha}dv_g<\infty$, the function $-\delta\omega$ is also integrable over $M$. 
 From this and $(\ref{Assumption of Gaffney 1})$, we can apply Gaffney's theorem for the $1$-form
 $\omega.$
  Therefore we have
$$0=\int_M(-\delta \omega)dv_g=\int_M|\tau(\p)|^{\frac{\alpha}{2}+1}dv_g,$$
which implies that $\tau(\p)=0.$

\end{proof}

If a biharmonic map $\p:(M,g)\rightarrow (N,h)$ is an isometric immersion, that is, $M$ is a biharmonic submanifold in $N$, we obtain the following result.

\vspace{5pt}

\begin{thm}\label{Th sub mfd}
Let $\p:(M,g)\rightarrow (N,h)$ be a biharmonic isometric immersion from  a complete Riemannian manifold $(M,g)$ into a Riemannian manifold $(N,h)$ with non-positive sectional curvature 
and let $\alpha$ be a real constant satisfying $1<\alpha<\infty$.

If 
$$\int_M|{\bf H}|^{\alpha}dv_g<\infty,$$
then $\phi$ is harmonic.
\end{thm}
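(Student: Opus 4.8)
The plan is to reduce Theorem~\ref{Th sub mfd} to Theorem~\ref{Th map} by observing that, for an isometric immersion, the finiteness of $\int_M|{\bf H}|^\alpha dv_g$ already forces $\int_M|d\p|^2dv_g<\infty$ to become irrelevant, because we can bypass the energy hypothesis entirely. Concretely, since $\tau(\p)=m{\bf H}$, the assumption $\int_M|{\bf H}|^\alpha dv_g<\infty$ is exactly $\int_M|\tau(\p)|^\alpha dv_g<\infty$, so Lemma~\ref{key lem 1} applies verbatim: we get $\onabla_X\tau(\p)=0$ for all $X\in\frak{X}(M)$ and $|\tau(\p)|=m|{\bf H}|$ is a constant, say $c\geq 0$. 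The goal is then to show $c=0$.

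First I would dispose of the case ${\rm Vol}(M,g)=\infty$ exactly as in case $(ii)$ of Theorem~\ref{Th map}: if $c\neq 0$ then $\int_M|\tau(\p)|^\alpha dv_g=c^\alpha\,{\rm Vol}(M,g)=\infty$, a contradiction, so $c=0$ and $\p$ is harmonic. It remains to treat ${\rm Vol}(M,g)<\infty$. Here the key point specific to isometric immersions is that $|d\p|^2=m$ pointwise (the differential of an isometric immersion from an $m$-manifold has $|d\p|^2={\rm Trace}\,(\p^{-1}h\circ g^{-1})=m$), hence
\begin{equation*}
\int_M|d\p|^2dv_g=m\,{\rm Vol}(M,g)<\infty.
\end{equation*}
Thus in the finite-volume case the energy is automatically finite, and case $(i)$ of Theorem~\ref{Th map} applies directly to give $\tau(\p)=0$. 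Combining the two cases, $\p$ is harmonic in all situations, which is the assertion.

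Alternatively, and perhaps more cleanly, I would run the Gaffney argument of the proof of Theorem~\ref{Th map} directly with the $1$-form $\omega(X):=|\tau(\p)|^{\frac{\alpha}{2}-1}\langle d\p(X),\tau(\p)\rangle$: since $|\tau(\p)|=c$ is constant, $\omega$ is (up to the constant $c^{\alpha/2-1}$) just $X\mapsto\langle d\p(X),\tau(\p)\rangle$, and $|\omega|\leq c^{\alpha/2-1}|d\p|\,|\tau(\p)|=c^{\alpha/2}\sqrt m$ is a bounded function, whence $\int_M|\omega|dv_g\leq c^{\alpha/2}\sqrt m\,{\rm Vol}(M,g)<\infty$ in the finite-volume case; and $-\delta\omega=|\tau(\p)|^{\frac{\alpha}{2}+1}=c^{\alpha/2+1}$ is likewise integrable, so Gaffney's theorem (Theorem~\ref{Gaffney}) yields $0=\int_M c^{\alpha/2+1}dv_g=c^{\alpha/2+1}\,{\rm Vol}(M,g)$, forcing $c=0$. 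I do not anticipate a genuine obstacle: the only thing to be careful about is the dichotomy on ${\rm Vol}(M,g)$, handling the infinite-volume case by the volume estimate and the finite-volume case by the observation $|d\p|^2\equiv m$; the codimension-one refinement in the Remark follows by noting that in \eqref{NS bih sub} the curvature term $\sum_i R^N({\bf H},d\p(e_i))d\p(e_i)$ pairs against ${\bf H}$ to give, when ${\bf H}$ is a multiple of a single unit normal $\xi$, the expression $-|{\bf H}|^2\,{\rm Ric}^N(\xi,\xi)$, so non-positivity of ${\rm Ric}^N$ suffices in place of non-positive sectional curvature in the inequality \eqref{0}.
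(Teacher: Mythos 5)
Your argument is correct, but it takes a genuinely different route from the paper's. You reduce the theorem to Theorem~\ref{Th map} via a dichotomy on ${\rm Vol}(M,g)$: the infinite-volume case is killed by the constancy of $|\tau(\p)|$ from Lemma~\ref{key lem 1}, and in the finite-volume case you observe that $|d\p|^2\equiv m$ for an isometric immersion, so the energy hypothesis of Theorem~\ref{Th map}$(i)$ is automatic. Both steps are sound. The paper instead exploits a feature of isometric immersions that makes the whole global machinery unnecessary at this stage: since ${\bf H}$ is normal, $\langle d\p(X),{\bf H}\rangle\equiv 0$, so differentiating this identically vanishing expression and using $\onabla_{e_i}{\bf H}=0$ yields the \emph{pointwise} identity $0=\sum_i\langle \onabla_{e_i}d\p(e_i)-d\p(\nabla_{e_i}e_i),{\bf H}\rangle=m|{\bf H}|^2$, with no Gaffney theorem and no case split on the volume. (Your ``alternative'' Gaffney argument is essentially this observation in disguise: the $1$-form $\omega$ you write down is identically zero for an isometric immersion because ${\bf H}$ is normal, so the divergence identity $-\delta\omega=|\tau(\p)|^{\frac{\alpha}{2}+1}$ already forces $\tau(\p)=0$ pointwise without integrating.) What the paper's approach buys is economy and locality in the final step; what yours buys is a uniform template --- everything funnels through Theorem~\ref{Th map} --- at the cost of the volume dichotomy. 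Your sketch of the codimension-one refinement is also correct in substance: for a hypersurface the curvature term in \eqref{0} reduces to a multiple of ${\rm Ric}^N(\xi,\xi)$ times $|{\bf H}|^2$, so non-positive Ricci curvature suffices (watch the sign convention: with the paper's curvature convention the term appearing in \eqref{0} is $m^2H^2\,{\rm Ric}^N(\xi,\xi)$, which is the quantity that must be $\leq 0$).
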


\vspace{5pt}

\begin{proof}
By Lemma $\ref{key lem 1}$, we have $\onabla _{X}{\bf H}=0$ for any vector field $X$ on $M$ and  $|{\bf H}|$ is constant.
Since ${\bf H}$ belongs to the normal component of $T_{\p(x)}N$\ $(x\in M)$, $\langle d\p(X),{\bf H}\rangle=0,$ for any vector field $X$ on $M$. 
From these, we obtain 
\begin{align*}
0=&\sum_{i=1}^m\Big\{ e_i\langle d\p(e_i), {\bf H}\rangle 
-\langle d\p(\nabla_{e_i}e_i),{\bf H} \rangle\Big\}\\
=&\sum_{i=1}^m \Big\{ \langle \onabla_{e_i}d\p(e_i), {\bf H}\rangle 
+\langle d\p(e_i), \onabla_{e_i}{\bf H}\rangle 
-\langle d\p(\nabla_{e_i}e_i),{\bf H} \rangle\Big\}\\
=&\sum_{i=1}^m  \langle \onabla_{e_i}d\p(e_i)-d\p(\nabla_{e_i}e_i), {\bf H}\rangle \\
=&m\langle {\bf H}, {\bf H}\rangle.
\end{align*}
Therefore $M$ is minimal, that is, $\p$ is harmonic.
\end{proof}

\qquad\\


\section{Biharmonic submersions into non-positively curved manifolds}\label{sub}

In this section, we apply Theorem $\ref{Th map}$ to submersions.

Wang and Ou showed that a biharmonic Riemannian submersion from a complete Riemannian manifold with constant sectional curvature into a Riemannian surface $(N^2,h)$ is harmonic (cf. \cite{zpwylo1}).
N. Nakauchi, H. Urakawa and S. Gudmundsson applied Theorem $\ref{Th of N-U-G}$ to submersions (cf. \cite{nnhusg1}).

\vspace{5pt}

We first recall harmonic morphisms (cf. \cite{pbjcw1}).

 Let $\p:(M,g)\rightarrow(N,h)$ be a smooth map between Riemannian manifolds, and let $x\in M$. Then $\p$ is called {\em horizontally weakly conformal} at $x$ if either\\
  $(i)$ $d\p_x=0,$ or\\
  $(ii)$ $d\p_x$ maps the horizontal space $\mathcal{H}_x=\{{\rm Ker}(d\p_x)\}^{\perp}$ conformally onto $T_{\p(x)}N,$ such that 
  $$h(d\p_x(X),d\p_x(Y))=\Lambda g(X,Y),~~~~~(X,Y\in \mathcal{H}_x).$$

\vspace{5pt}

 The map $\p$ is called {\em horizontally weakly conformal} on $M$ if it is horizontally weakly conformal at every point of $M$; if further, $\p$ has no critical points, then we call it  a {\em horizontally conformal submersion}.
 Note that if $\p:(M,g)\rightarrow (N,h)$ is a horizontally weakly conformal map and {\rm dim}$M$ $<$ {\rm dim}$N$, then $\p$ is constant.

\vspace{5pt}

If for every harmonic function $f:V\rightarrow \mathbb{R}$ defined on an open subset $V$ of $N$ with $\p^{-1}(V)$ non-empty, the composition $f\circ \p$ is harmonic on $\p^{-1}(V)$, then $\p$ is called a {\em harmonic morphism}. Harmonic morphisms are characterized as follows (cf. \cite{bf1}, \cite{ti1}).

   \vspace{5pt}
   
   \begin{thm}[\cite{bf1}, \cite{ti1}]\label{N-S harmonic morphism}
A smooth map $\p:(M,g)\rightarrow (N,h)$ between Riemannian manifolds is a harmonic morphism if and only if $\p$ is both harmonic and horizontally weakly conformal.
   \end{thm}
   
   \vspace{5pt}
   
Let $\p:(M,g)\rightarrow (N,h)$ be a submersion, then each tangent space $T_xM$ can be decomposed as follows.
\begin{equation}\label{decomposition}
T_xM=\mathcal{V}_x\oplus\mathcal{H}_x,
\end{equation}
where $\mathcal{V}_x={\rm Ker}(d\p_x)$ is the vertical space and $\mathcal{H}_x$ is the horizontal space.
If there exists a positive $C^{\infty}$ function $\lambda$ on $M$ such that, for each $x\in M$,
$$h(d\p_x(X),d\p_x(Y))=\lambda^2(x)g(X,Y),~~~(X,Y\in \mathcal{H}_x),$$
then $\lambda$ is called the {\em dilation}.

When $\p:(M^m,g)\rightarrow (N^n,h)$  $(m>n\geq2)$ is a horizontally conformal submersion,
  the tension field $\tau(\p)$ is given by 
  \begin{equation}\label{tension of sub}
  \tau(\p)=\frac{n-2}{2}\lambda^2d\p\left({\rm grad}_{\mathcal{H}}\left(\frac{1}{\lambda^2}\right)\right)-(m-n)d\p(\hat{\bf H}),
  \end{equation}
  where ${\rm grad}_{\mathcal{H}}\left(\frac{1}{\lambda^2}\right)$ is the $\mathcal{H}$-component of the decomposition according to $(\ref{decomposition})$ of ${\rm grad}\left(\frac{1}{\lambda^2}\right)$, and $\hat{\bf H}$ is the trace of the second fundamental form of each fiber which is given by $\hat{\bf H}=\frac{1}{m-n}\sum_{i=1}^m\mathcal{H}(\nabla_{e_i}e_i)$, where a local orthonormal frame field $\{e_i\}_{i=1}^m$ on $M$ is taken in such a way that $\{e_{ix}| i=1,\cdots, n\}$ belong to $\mathcal{H}_x$ and $\{e_{jx}| j=n+1,\cdots, m\}$ belong to $\mathcal{V}_x$, where $x$ is in a neighborhood in $M$.

 Then following result follows from Theorem $\ref{Th map}$ immediately.
 
  \vspace{5pt}

\begin{prop}
\label{Prop sub}
Let $\p:(M^m,g)\rightarrow (N^n,h)$ $(m>n\geq2)$ be a biharmonic horizontally conformal submersion from  a complete Riemannian manifold $(M,g)$ into a Riemannian manifold $(N,h)$ with non-positive sectional curvature
 and let $\alpha$ be a real constant satisfying $1<\alpha<\infty$.

If
\begin{equation}
\int_M\lambda^2\left|\frac{n-2}{2}\lambda^2{\rm grad}_{\mathcal{H}}\left(\frac{1}{\lambda^2}\right)-(m-n)\hat{\bf H}\right|^{\alpha}_{g}dv_g<\infty,
\end{equation}
and if either $\int_M\lambda^2dv_g<\infty$ or ${\rm Vol}(M,g)=\infty$.
Then, $\p$ is a harmonic morphism.
\end{prop}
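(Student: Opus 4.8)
The plan is to deduce Proposition~$\ref{Prop sub}$ directly from Theorem~$\ref{Th map}$ by unwinding the definitions. First I would observe that a horizontally conformal submersion $\p:(M^m,g)\to(N^n,h)$ with $m>n\geq 2$ has dilation $\lambda>0$ everywhere, so the energy density is $|d\p|^2=n\lambda^2$. Hence the hypothesis $\int_M\lambda^2\,dv_g<\infty$ is precisely the finiteness of energy $\int_M|d\p|^2\,dv_g<\infty$ required in Theorem~$\ref{Th map}$(i), while the alternative hypothesis ${\rm Vol}(M,g)=\infty$ is exactly the hypothesis of Theorem~$\ref{Th map}$(ii).

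Next I would rewrite the integrability assumption in terms of the tension field. By the horizontal conformality, $d\p$ restricted to $\mathcal H_x$ is a conformal linear isomorphism onto $T_{\p(x)}N$ with factor $\lambda$; since both ${\rm grad}_{\mathcal H}(1/\lambda^2)$ and $\hat{\bf H}$ are horizontal vector fields, formula $(\ref{tension of sub})$ gives
\begin{equation*}
|\tau(\p)|_h^2=\lambda^2\left|\frac{n-2}{2}\lambda^2{\rm grad}_{\mathcal H}\!\left(\frac{1}{\lambda^2}\right)-(m-n)\hat{\bf H}\right|_g^2,
\end{equation*}
so that $|\tau(\p)|_h^\alpha=\lambda^\alpha|\cdots|_g^\alpha$. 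This is not literally the integrand displayed in the statement, which carries a factor $\lambda^2$ rather than $\lambda^\alpha$; I would reconcile this either by noting that the displayed condition together with $\lambda$ bounded (or together with the other hypotheses) controls $\int_M|\tau(\p)|^\alpha\,dv_g$, or—more cleanly—by reading the displayed integrand as $|\tau(\p)/\lambda^{?}|$ absorbed appropriately; in any case the point is that the hypothesis is arranged exactly so that $\int_M|\tau(\p)|^\alpha\,dv_g<\infty$.

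With these translations in hand, Theorem~$\ref{Th map}$ applies verbatim: $(M,g)$ is complete, $(N,h)$ has non-positive sectional curvature, $\p$ is biharmonic, $1<\alpha<\infty$, $\int_M|\tau(\p)|^\alpha\,dv_g<\infty$, and either the energy is finite or ${\rm Vol}(M,g)=\infty$. Hence $\tau(\p)=0$, i.e. $\p$ is harmonic. Finally, since $\p$ is by hypothesis a horizontally conformal submersion it is in particular horizontally weakly conformal, so by Theorem~$\ref{N-S harmonic morphism}$ a harmonic horizontally weakly conformal map is a harmonic morphism, which is the desired conclusion.

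The only genuine subtlety—the main obstacle—is the bookkeeping with the weight: making sure that the stated integrand, with its $\lambda^2$ factor, really does imply $\int_M|\tau(\p)|^\alpha\,dv_g<\infty$ (equivalently, that no separate boundedness assumption on $\lambda$ is silently needed), and confirming that the two alternative side hypotheses match parts (i) and (ii) of Theorem~$\ref{Th map}$ through the identity $|d\p|^2=n\lambda^2$. Everything else is a direct substitution of $(\ref{tension of sub})$ into the already-proved theorem, followed by an appeal to Theorem~$\ref{N-S harmonic morphism}$.
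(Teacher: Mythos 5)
Your proposal follows the same route as the paper: substitute formula $(\ref{tension of sub})$ into Theorem~$\ref{Th map}$ (matching $\int_M\lambda^2\,dv_g<\infty$ with finite energy via $|d\p|^2=n\lambda^2$, and ${\rm Vol}(M,g)=\infty$ with part (ii)), then invoke Theorem~$\ref{N-S harmonic morphism}$ to pass from ``harmonic'' to ``harmonic morphism.'' The one step you hesitate over is precisely the step the paper glosses over, and your hesitation is well founded. The paper's proof simply asserts
\begin{equation*}
\int_M|\tau(\p)|^{\alpha}dv_g=\int_M\lambda^2\left|\frac{n-2}{2}\lambda^2{\rm grad}_{\mathcal{H}}\left(\frac{1}{\lambda^2}\right)-(m-n)\hat{\bf H}\right|^{\alpha}_{g}dv_g
\end{equation*}
without computation. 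But, as you note, the vector field $V=\frac{n-2}{2}\lambda^2{\rm grad}_{\mathcal{H}}(1/\lambda^2)-(m-n)\hat{\bf H}$ is horizontal, so horizontal conformality gives $|\tau(\p)|_h=|d\p(V)|_h=\lambda|V|_g$ and hence $|\tau(\p)|^{\alpha}=\lambda^{\alpha}|V|^{\alpha}_g$; the correct weight is $\lambda^{\alpha}$, not $\lambda^{2}$, and the two agree only when $\alpha=2$ or $\lambda\equiv1$. So the displayed hypothesis of the proposition, read literally, does not imply $\int_M|\tau(\p)|^{\alpha}dv_g<\infty$ without an extra assumption (e.g.\ $\lambda$ bounded in the appropriate direction); the clean fix is to replace $\lambda^2$ by $\lambda^{\alpha}$ in the hypothesis, which is evidently what is intended. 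You correctly flagged this and left it open; the paper does not flag it at all. Apart from this point your argument is the paper's argument, and your observation that the energy density is $n\lambda^2$ (the paper drops the harmless constant $n$) is also correct.
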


\vspace{5pt}

\begin{proof}
By $(\ref{tension of sub})$,
\begin{align*}
\int_M|\tau(\p)|^{\alpha}dv_g
=\int_M\lambda^2\left|\frac{n-2}{2}\lambda^2{\rm grad}_{\mathcal{H}}\left(\frac{1}{\lambda^2}\right)-(m-n)\hat{\bf H}\right|^{\alpha}_{g}dv_g<\infty.
\end{align*}
 Since $\int_M|d\p|^2dv_g=\int_M\lambda^2dv_g$, by using Theorem $\ref{Th map}$, $\p$ is a harmonic map.
 
 Furthermore, since $\p$ is also a horizontally conformal submersion, by Theorem~$\ref{N-S harmonic morphism}$, $\p$ is a harmonic morphism.
\end{proof}

If ${\rm dim}N=2$, Proposition $\ref{Prop sub}$ implies the following corollary.

\vspace{5pt}

\begin{cor}

Let $\p:(M^m,g)\rightarrow (N^n,h)$ $(m>n=2)$ be a biharmonic horizontally conformal submersion from  a complete Riemannian manifold $(M^m,g)$ into a Riemannian manifold $(N^n,h)$ with non-positive sectional curvature 
and let $\alpha$ be a real constant satisfying $1<\alpha<\infty$.

If
\begin{equation}
\int_M\lambda^2\left|\hat{\bf H}\right|^{\alpha}_{g}dv_g<\infty,
\end{equation}
and if either $\int_M\lambda^2dv_g<\infty$ or ${\rm Vol}(M,g)=\infty$.
Then, $\p$ is a harmonic morphism.
\end{cor}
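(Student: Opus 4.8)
The plan is simply to specialize Proposition \ref{Prop sub} to the case $n=2$. The key observation is that the coefficient $\frac{n-2}{2}$ appearing in the tension field formula $(\ref{tension of sub})$ vanishes identically when $n=2$; hence for such a horizontally conformal submersion one has
\[
\tau(\p)=-(m-2)\,d\p(\hat{\bf H}),
\]
and the general integrability condition
\[
\int_M\lambda^2\left|\tfrac{n-2}{2}\lambda^2\,{\rm grad}_{\mathcal{H}}\!\left(\tfrac{1}{\lambda^2}\right)-(m-n)\hat{\bf H}\right|^{\alpha}_{g}dv_g
\]
collapses to $(m-2)^{\alpha}\int_M\lambda^2\left|\hat{\bf H}\right|^{\alpha}_{g}\,dv_g$. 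Since $m>n=2$, the factor $(m-2)^{\alpha}$ is a strictly positive finite constant, so this quantity is finite precisely when the corollary's hypothesis $\int_M\lambda^2\left|\hat{\bf H}\right|^{\alpha}_{g}\,dv_g<\infty$ holds.

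With this reduction in hand, the remaining hypotheses of the corollary --- that $\p$ is a biharmonic horizontally conformal submersion from a complete Riemannian manifold into a Riemannian manifold with non-positive sectional curvature, that $1<\alpha<\infty$, and that either $\int_M\lambda^2\,dv_g<\infty$ or ${\rm Vol}(M,g)=\infty$ --- coincide verbatim with those of Proposition \ref{Prop sub}. Applying Proposition \ref{Prop sub} therefore yields immediately that $\p$ is a harmonic morphism. There is no genuine obstacle here: the corollary is a direct specialization of the proposition, and the only point worth recording explicitly is that $m-2$ is a strictly positive constant, so that rescaling the integrand by $(m-2)^{\alpha}$ does not affect finiteness of the integral.
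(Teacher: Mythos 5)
Your proposal is correct and follows exactly the paper's (implicit) argument: the corollary is stated as an immediate consequence of Proposition \ref{Prop sub}, obtained by setting $n=2$ so that the $\frac{n-2}{2}$ term vanishes and the integrand reduces to $(m-2)^{\alpha}\lambda^2|\hat{\bf H}|^{\alpha}_{g}$, whose finiteness is equivalent to the corollary's hypothesis. Your explicit remark that the constant $(m-2)^{\alpha}>0$ does not affect integrability is the only detail the paper leaves unsaid.
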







\bibliographystyle{amsbook}

\end{document}